\documentclass[ejs]{imsart}

\RequirePackage[OT1]{fontenc}
\RequirePackage{amsthm,amsmath}
\RequirePackage[numbers]{natbib}
\RequirePackage[colorlinks,citecolor=blue,urlcolor=blue]{hyperref}
\usepackage{graphicx}

\pubyear{2020}
\volume{0}
\issue{0}
\firstpage{1}
\lastpage{8}
%\arxiv{2010.00000}

\startlocaldefs
\numberwithin{equation}{section}
\theoremstyle{plain}

\endlocaldefs

\usepackage{amsfonts}
\usepackage{mathrsfs}
\usepackage{float}
\usepackage{amsmath,amssymb}
\usepackage{latexsym}
\usepackage{epstopdf}
\newtheorem {theorem}{Theorem}[section]
\newtheorem {corollary}{Corollary}[section]
\newtheorem{lemma}{Lemma}[section]
\newtheorem{remark}{Remark}[section]
\newcommand{\E}{\mathbb{E}}
\newcommand{\bi}[1]{\mbox{\boldmath{$ #1 $}}}

\begin{document}

\begin{frontmatter}
\title{\Large Asymptotic Normality of Gini Correlation in High Dimension with Applications to the K-sample Problem}
\runtitle{Categorical Gini Correlation in High Dimension}

\begin{aug}
\author{\fnms{Yongli} \snm{Sang}\ead[label=e1]{yongli.sang@louisiana.edu}}
\and
\author{\fnms{Xin} \snm{Dang}\thanksref{t}\ead[label=e2]{xdang@olemiss.edu}}

\address{Department of Mathematics, University of Louisiana at Lafayette, Lafayette, LA 70504, USA\\
Department of Mathematics, University of Mississippi, University, MS 38677, USA\\
\printead{e1,e2}}

\thankstext{t}{Corresponding author}
\runauthor{Y. Sang and X. Dang}
\end{aug}

\begin{abstract}
The categorical Gini correlation proposed by Dang $et$ $al.$ \cite{Dang2021} is a dependence measure to characterize independence between categorical and numerical variables.  The asymptotic distributions of the sample correlation under dependence and independence have been established when the dimension of the numerical variable is fixed. However, its asymptotic behavior for high dimensional data has not been explored.  
In this paper, we develop the central limit theorem for the Gini correlation in the more realistic setting where the dimensionality of the numerical variable is diverging. 
 We then construct  a powerful and consistent test for the $K$-sample problem based on the asymptotic normality. The proposed test not only avoids computation burden but also gains power over the permutation procedure.   Simulation studies and real data illustrations show that the proposed test is more competitive to existing methods  across a broad range of realistic situations, especially in unbalanced cases. 
 \end{abstract}

\begin{keyword}[class=MSC]
\kwd[Primary ]{60H20}
%\kwd{60K35}
\kwd[; secondary ]{60H15}
\end{keyword}

\begin{keyword}
\kwd{asymptotic normality}
\kwd{categorical Gini correlation}
\kwd{distance correlation}
\kwd{high dimensional K-sample test}
\end{keyword}
\tableofcontents
\end{frontmatter}

%%%%%%%%%%%%%%%%%%%%%%%%%%%%%%%%%%%%%%%%%%%
%%%%%%%%%%%%%%%%%%%%%%%%%%%%%%%%%%%%%%%%%
\section{Introduction}
Recently, Dang $et$ $al.$ \cite{Dang2021} proposed the categorical Gini covariance and correlation, $\mbox{gCov}(\bi X, Y)$ and $\mbox{gCor}(\bi X, Y)$, to measure  dependence of a $p$-variate numerical variable $\bi X$  and a categorical variable $Y$.   
Suppose that the categorical variable $Y$ takes values $L_1, ...,L_K$ and its distribution $P_Y$ is $P(Y = L_k) = p_k>0$ for $k=1,2,...,K$. $\bi X$ is from $F$ and $\psi$ denotes its characteristic function. 
Assume that the conditional distribution of $\bi{X}$ given $Y=L_k$  is $F_k$ with the corresponding characteristic function $\psi_k$.  The Gini covariance is defined as 
\begin{equation} \label{eqn:gcov}
\mbox{gCov}(\bi X, Y) = c(p) \sum_{k=1}^K p_k \int_{\mathbb{R}^p} \frac{|\psi_k(\bi t) -\psi(\bi t)|^2}{\|\bi t\|^{p+1}} d\bi t,
\end{equation}
where $c(p)$ is a known constant.  The Gini covariance measures dependence of $\bi X$ and $Y$ by quantifying the difference between the conditional and the unconditional characteristic functions. The corresponding Gini correlation standardizes the Gini covariance to have a range in [0,1].   Zero Gini covariance or correlation mutually implies independence. 

Another dependence measure that could characterize  independence between two random variables is the popular distance correlation proposed by Szekely, Rizzo and Bakirov~\cite{Szekely2007}. It is flexible for $\bi X$ and $\bi Y$ in arbitrary dimensions and any types (numerical or categorical). It has attracted much attention since then,  see e.g. \cite{Gao2021, Li2018, Lyons13, Szekely2009, Szekely2013a, Szekely2013b,  Szekely17, Zhu2020, Zhu2021} and references therein. In the case of $p$-variate $\bi X$ and categorical $Y$, the distance covariance becomes
\begin{equation} \label{eqn:dcov}
\mbox{dCov}(\bi X, Y) = c(p) \sum_{k=1}^K p_k^2 \int_{\mathbb{R}^p} \frac{|\psi_k(\bi t) -\psi(\bi t)|^2}{\|\bi t\|^{p+1}} d\bi t. 
\end{equation}
Comparing (\ref{eqn:gcov}) and (\ref{eqn:dcov}), we see that the two covariances are closely related. When the categorical variable $Y$ takes two values ($K=2$) or $P_Y$ is uniform, they are only different with a scaling factor~\cite{Dang2021}. While for the general $K\geq 3$ and unbalanced $P_Y$, the Gini covariance is a better dependence measure than the distance covariance because the weight $p_k$ in (\ref{eqn:gcov}) takes the nature of the categorical variable, while $\mbox{dCov}(\bi X, Y)$, due to its squared weights, is dominated by the classes with large probabilities and the contribution from smaller classes is substantially reduced.  

A fruitful research has been developed to study the asymptotic distributions of the sample distance statistics in different scenarios. Under independence of $\bi X \in \mathbb{R}^p$ and $\bi Y \in \mathbb{R}^q$, the standard sample distance covariance or correlation converges in distribution to a mixture of chi-squared distribution in the classical setting where the sample size $n \to \infty$ and $p, q$ are fixed~\cite{Szekely2007, Huo2016}; while in the high-dimension-low-sample size (HDLSS) setting when $p, q \to \infty$ and $n$ is fixed,  Sz{\'e}kely and Rizzo \cite{Szekely2013b} derived a $t$-distribution limit of the unbiased sample covariance by assuming that the components of the high-dimensional vectors in $\bi X$ and in $\bi Y$ are exchangeable; Zhu $et$ $al.$ \cite{Zhu2020} relaxed that assumption and considered the high-dimensional medium-sample-size setting (HDMSS) where $n, p, q \to \infty$ but $p, q$ growing more rapidly than $n$;  Gao $et$ $al.$ \cite{Gao2021} have developed central limit theorems in a more realistic high dimensional high-sample-size setting (HDHSS) where $n$ and $p+q$ diverge in an arbitrary fashion. This result applies for the sample distance covariance of (\ref{eqn:dcov}) in which $q =1$ and $n,p \to \infty$. However, there is no literature to study limiting distributions of sample Gini covariance and correlation in high dimension.   

In the classical setting, Dang $et$ $al.$ \cite{Dang2021} studied the asymptotic distributions of the $V$-statistic sample Gini covariance and correlation with the dimension of $\bi X$ fixed.  They admit normal limits when $\bi X$ and $Y$ are dependent and converge in distribution to a quadratic form of centered Gaussian random variables when $\bi X$ and $Y$ are independent. 
In this paper, we will work with unbiased $U$-statistic covariance estimator and associated sample Gini correlation in high dimension. The first objective of this paper is to establish their asymptotic distributions for sample Gini covariance and correlation under independence of $\bi X$ and $Y$  in the HDHSS setting. 

The derived asymptotic distributions can be used for independence test. In other words,  it is to test the equality of $K$ conditional distributions, which is the classical $K$-sample problem encountered in almost every research field.  Due to its fundamental importance and wide applications, research for this $K$-sample problem has been kept active since 1940's.   For example, the widely used and well-studied tests such as Cram\'{e}r-von Mises test \cite{Kiefer1959, Curry2019}, Anderson-Darling test \cite{Darling57, Scholz1987} and their variations utilize different norms on the difference of empirical distribution functions, while some \cite{Anderson94, Martinez09} are based on the comparison of density estimators if the underlying distributions are continuous.  Other tests \cite{Szekely04, Fernandez08} are based on the difference between characteristic functions. Indeed, the test in \cite{Szekely04} is equivalent to ours, but their test only considers the case of $K=2$. Another equivalent test is the DISCO~\cite{Rizzo2010} whose test statistic is the ratio of the between Gini variation and the within Gini variation, while our Gini correlation is the ratio of the between Gini variation and the total Gini variation. Heller, Heller and Gorfine \cite{Heller2013} and Heller $et$ $al.$ \cite{Heller2016} proposed a dependence test based on rank distances. 
All those distance-based tests require a permutation procedure to determine the critical values. Sang, Dang and Zhao \cite{Sang2020} developed a nonparametric test which applies the jackknife empirical likelihood and has a standard limiting chi-squared distribution. Other tests viewing the $K$-sample test as an independent test between a numerical and categorical variable can be found in \cite{Cai2017, Jiang2015, Wang2017}. 
However, most the afore-mentioned work focuses on the fixed dimensional case and perform poorly or may even fail in high dimension. 

Recently, several distance-based tests for two-sample problem have been proposed in high dimension, see \cite{Biswas2014, Chen2017, Li2018, Zhu2021}. Li \cite{Li2018} constructed a test based on interpoint distances under HDLSS. 
Zhu and Shao \cite{Zhu2021} studied the two-sample problem using energy distance (ED) and maximum mean discrepancy with Gaussian and Laplacian kernels under HDLSS and HDMSS, in which they have shown that all these tests are inconsistent under some scenarios. The general $K$-sample testing in high dimension is more challenging and results in literature are very scarce. Mukhopadhyay and Wang \cite{Mukhopadhyay2020} constructed a graph-based nonparametric approach under HDLSS. However, the power for the test is extremely low under some settings. Gao $et$ $al.$ \cite{Gao2021} tested the $K$-sample problem in high dimension based the distance correlation. 

Our second objective of this paper is to use the asymptotic result of the Gini covariance or correlation to construct a new consistent $K$-sample test in high dimension.  The advantages of the new test include
\begin{itemize}
\item computational efficiency. It avoids a permutation procedure which is computationally expansive. 
\item statistical efficiency. It gains power over the nonparametric tests.
\item robustness for class imbalance. It is more appropriate than the distance based tests in dealing with unbalanced data.  
\end{itemize}

Throughout this paper, if not mentioned otherwise, the letter $C$, with or without a subscript, denotes a generic positive finite constant whose exact value is independent of sample sizes and may change from line to line. $\|\cdot\|$ represents the Euclidean norm, that is, $\| \bi a\|=\sqrt{a^2_1+a^2_2+\cdots+a^2_p}$ for a $p$-vector $\bi a=(a_1, a_2, \cdots, a_p)^T \in \mathbb{R}^p$. For two  sequences, $a_n, b_n$, of real numbers, $a_n=o(b_n)$ means $\lim_{n \to \infty}{a_n}/{b_n}=0$, and $a_n=O(b_n)$ means $L \leq {a_n}/{b_n} \leq U$ for some finite constants $L$ and $U$. For random variable sequences, similar notations $o_p(n)$ and $O_p(n)$ are used to stand for the relationships holding in probability. 

The remainder of the paper is organized as follows. In Section \ref{sec:cgc}, we first briefly review the other representation of Gini correlation and review the existing statistical inference, then we present a $U$-estimator for the Gini correlation and the central limit theorem for the $U$-estimator when both the sample sizes and dimensionality are diverging.  The $K$-sample test is proposed and its consistency is established. In Section \ref{sec:simulationstudy}, we conduct simulation studies to evaluate the performance of the proposed test. A real data analysis is illustrated in Section \ref{sec:realdata} to compare the proposed test with current existing approaches. 
We conclude and discuss future works in Section \ref{sec:conclusion}. All technical proofs are provided in Appendix.

\section{Inference of Gini covariance and correlation in high dimension}\label{sec:cgc}
\subsection{Categorical Gini correlation}
The Gini covariation between $\bi X$ and $Y$ defined in (\ref{eqn:gcov}) can be represented in the multivariate Gini mean differences (GMD). Let $(\bi X_1,\bi X_2)$ and $(\bi X_1^{(k)}, \bi X_2^{(k)})$ be independent pair variables independently from $F$ and $F_k$ respectively. Define $\Delta =\E\|\bi X_1-\bi X_2\|$ as the GMD of $F$ and $\Delta_k=\E \|\bi X_1^{(k)}-\bi X_2^{(k)}\|$ as the GMD of $F_k$. From \cite{Dang2021}, we have
\begin{equation}\label{eqn:gcov2}
\mbox{gCov}(\bi X,Y) = \Delta-\sum_{k=1}^Kp_k\Delta_k.
\end{equation}
Then the Gini correlation is 
\begin{equation} \label{eqn:gcor}
\mbox{gCor}(\bi X, Y) = \frac{ \Delta-\sum_{k=1}^Kp_k\Delta_k}{\Delta}. 
\end{equation}
This representation allows a nice interpretation. The Gini covariance is the between Gini variation and the Gini correlation is the ratio of the between and the total variation. Also from this representation, it is straightforward to obtain sample estimators.

%\subsection{Current inference of Gini correlation}
Dang {\em et al.} \cite{Dang2021} used V-statistic estimators and derived limiting distributions of the estimators under the classical setting when the dimension of $\bi X$ is fixed. More specifically, 
suppose a sample  ${\cal D} =\{(\bi X_1, Y_1), (\bi X_2, Y_2), ...., (\bi X_n, Y_n)\}$ is drawn from the joint distribution of $\bi X$ and $Y$. Write ${\cal D} ={\cal D}_1\cup {\cal D}_2...\cup {\cal D}_K$,  where ${\cal D}_k=\left \{\bi X^{(k)}_{1}, \bi X^{(k)}_{2}, ...,\bi X^{(k)}_{n_k}\right \}$ is the sample with $Y_i=L_k$ and $n_k$ is the number of sample observations in the $k^{th}$ class.  Plugging in $\hat{p}_k= n_k/n$ and V-statistics 
 \begin{align*} 
&  \;\; \tilde{\Delta}_k = n_k^{-2}\sum_{1 \leq i,j \leq n_k} \|\bi X_i^{(k)} -\bi X_j^{(k)}\|,  \;\;\; \tilde{\Delta} = n^{-2} \sum_{1\leq i,j \leq n} \|\bi X_i -\bi X_j\|
\end{align*}
to (\ref{eqn:gcor}), the estimator $\hat{\rho}_g(\bi X,Y)$ is obtained.  Under the assumption of $\E \|\bi X\|^2 < \infty$ with $p$ fixed and $n \to \infty$, $\hat{\rho}_g(\bi X,Y)$ has the limiting distributions as below. 

\begin{enumerate}
\item If $\mbox{gCor}(\bi X,Y)\neq 0$, then  $$ \sqrt{n} (\hat{\rho}_g(\bi X,Y) -\mbox{gCor}(\bi X,Y)) \stackrel{d}{\longrightarrow} {\cal N}(0, \sigma_g^2),$$
where $\sigma_g^2$ is the asymptotic variance. 
\item If $\mbox{gCor}(\bi X,Y)=0$, then
\begin{equation} \label{eqn:qchisq}
n \hat{\rho}_g(\bi X,Y)   \stackrel{d}{\longrightarrow} \frac{4}{\Delta}\left [ \sum_{s=1}^\infty \sum_{k=1}^K (1-p_k)  \lambda_s Z_{s,k}^2+ \sum_{s=1}^\infty \sum_{1\leq k <l\leq K} \sqrt{p_kp_l} \lambda_s Z_{s,k}Z_{s,l}\right ],
\end{equation}
where $Z_{s,k} (k=1,...,K, s=1,2,...)$ are independent standard normal variates and $\lambda_s$ are nonnegative coefficients.  
\end{enumerate}

Under  independence of $\bi X$ and $Y$,  $ \hat{\rho}_g(\bi X,Y)$ converges to a quadratic form of normal random variables. This result is difficult to be applied for the independence test, and hence one has to rely on a permutation procedure to determine a critical value of the test, which is computationally expansive. 

This result is obtained under the classical setting.  The inference for the Gini correlation in high dimension has not been explored and we will fill this gap by developing the asymptotic distributions when both the sample sizes and the dimensionality diverge to infinity. 

%%%%%%%%%%%%%%%%%%%%%%%%%%%%%%%%%%%%%%%%%
\subsection{U-estimators and projection representation}\label{sec:hdcgc}
When the dimension $p$ is large, the V-statistic Gini covariance and correlation estimators may have  issues about bias. Therefore,  we will estimate the GMDs  by unbiased $U$-statistics. That is, 
\begin{align} 
& \hat{\Delta}= {n \choose 2}^{-1}\sum_{1 \leq i <j \leq n}\|\bi X_i-\bi X_j\|:=U_n; \nonumber\\
 & \hat{\Delta}_k = {n_k \choose 2}^{-1}\sum_{1 \leq i<j \leq n_k}\|\bi X^{(k)}_i-\bi X^{(k)}_j\|:=U_{n_k}.\label{Ustat}
\end{align}
Thus Gini covariance and correlation can be estimated by
\begin{align}\label{gcovn}
\mbox{gCov}_n(\bi X, Y)&= \hat{\Delta} - \sum_{k=1}^K \hat{p}_k \hat{\Delta}_k = U_n -  \sum_{k=1}^K \hat{p}_k U_{n_k}
\end{align}
and 
\begin{align}\label{gcor}
\mbox{gCor}_n(\bi X, Y)&=\dfrac{\mbox{gCov}_n(\bi X, Y)}{\hat{\Delta}} = \dfrac{U_n -  \sum_{k=1}^K \hat{p}_k U_{n_k}}{U_n},
\end{align}
respectively. Both of them are functions of $U$-statistics $U_n$ and  $U_{n_k}$'s. We shall focus on the asymptotic distribution of $\mbox{gCov}_n(\bi X, Y)$. The application of Slutsky's theorem allows us to obtain the result on $\mbox{gCor}_n(\bi X, Y)$ immediately. 

Under independence of $\bi X$ and $Y$, the sample Gini covariance $\mbox{gCov}_n$ in (\ref{gcovn}) is a linear combination of $U$-statistics with first-order degeneracy.  By classical theory about $U$ statistics in the fixed dimensional asymptotic (fixed dimension with sample sizes diverge to infinity), a non-normal limiting distribution holds, a similar result as (\ref{eqn:qchisq}).  However, as both the  the dimension and the sample size go large, the degenerate $U$-statistic will admit a normal limit. To establish this result,  we first take decompositions of U-statistics in (\ref{Ustat}) and rewrite (\ref{gcovn}).   

By the Hoeffding decomposition, we have
\begin{align*}
U_n = \Delta + \dfrac{2}{n}\sum_{i=1}^n \left \{\E\big(\|\bi X-\bi X_i\||\bi X_i\big)-\Delta\right\}+ {n \choose 2}^{-1}\sum_{1 \leq i<j \leq n}d(\bi X_i, \bi X_j),
\end{align*}
where
\begin{eqnarray}
d(\bi X_1, \bi X_2)&=&\|\bi{X}_1-\bi{X}_2\|-\E\big(\|\bi X_1-\bi X_2\| \bigm |\bi X_1\big)-\E\big(\|\bi X_1-X_2\| \bigm |\bi X_2 \big)\nonumber \\
&&+\E\|\bi X_1-\bi X_2\|\label{d} 
\end{eqnarray}
is called the double centered distance and it is actually the second order centered projection of the kernel function of $U_n$.  Analogously,
\begin{align*}
U_{n_k} = & \Delta_k + \dfrac{2}{n_k}\sum_{i=1}^{n_k} \left \{\E\big(\|\bi X^{(k)}-\bi X_i^{(k)}\||\bi X_i^{(k)}\big)-\Delta_k\right\}\\
&+ {n_k \choose 2}^{-1}\sum_{1 \leq i<j \leq n_k}d(\bi X^{(k)}_i, \bi X^{(k)}_j).
\end{align*}

Under  independence of $\bi X$ and $Y$, we have $F_1 = F_2 =...=F_K =F$.  Hence $ \Delta=\Delta_k, \ k=1,2,...,K$ and 
\begin{align*}
\sum_{k=1}^K \hat{p}_k \dfrac{2}{n_k}\sum_{i=1}^{n_k} \E\big(\|\bi X^{(k)}-\bi X^{(k)}_i\||\bi X^{(k)}_i\big)&
=\dfrac{2}{n}\sum_{i=1}^n \E\big(\|\bi X-\bi X_i\||\bi X_i\big).
\end{align*}
Then we can represent (\ref{gcovn}) as
\begin{align}
&\mbox{gCov}_n(\bi X, Y) \nonumber \\
&={n \choose 2}^{-1}\sum_{1 \leq i<j \leq n}
d(\bi X_i, \bi X_j)-\sum_{k=1}^K \hat{p}_k{n_k \choose 2}^{-1}\sum_{1 \leq i<j \leq n_k}d(\bi X^{(k)}_i, \bi X^{(k)}_j), \label{dgcovn}
\end{align}
under the null that $\bi X$ and $Y$ are independent. 

The representation of (\ref{dgcovn}) has advantages over (\ref{gcovn}) due to appealing orthogonal properties of $d(\bi X_1, \bi X_2)$ as stated in Lemmas \ref{prop_d} and \ref{prop_eta} in Appendix. Those properties largely simplify the calculation of specific moments involved. 

\subsection{Asymptotic normality}\label{subsec:clt}
We study the asymptotic distributions of the $U$-estimators in this section. Let $\bi X_1, \bi X_2, \bi X_3$ and $\bi X_4$ be i.i.d copies of $\bi X$.  The following conditions will be needed to facilitate the proofs. 
\begin{description} \label{Condition}
\item \textbf{C}1. $\E \|\bi X\|^{4} < \infty$;
\item  \textbf{C}2. $\dfrac{\E d^4(\bi X_1, \bi X_2)}{n \big(\E d^2(\bi X_1, \bi X_2)\big)^2} \rightarrow 0$;  
  \item \textbf{C}3. $\dfrac{\E d(\bi X_1, \bi X_3)d(\bi X_2, \bi X_3)d(\bi X_1, \bi X_4)d(\bi X_2, \bi X_4)}{\big(\E d^2(\bi X_1, \bi X_2)\big)^2} \to 0$; 
 \item \textbf{C}4.  $\sqrt{n}\mbox{gCov}(\bi X, Y) \to \infty$.
\end{description}

 \begin{remark}
Our conditions \textbf{C}2 and \textbf{C}3 are corresponding to conditions (18) and (19) in \cite{Gao2021} when $\tau=1$. 
 In fact, the condition \textbf{C}2 can be weaken to be $\dfrac{\E \big( |d(\bi X_1,\bi X_2)|^{2+2\alpha}\big)}{n^\alpha \big(\E d^2(\bi X_1, \bi X_2)\big)^2} \rightarrow 0$ for some constant $0<\alpha \leq 1$. However,  it is hard to check the condition when $0<\alpha<1$, so we take the stronger but simple condition. 
 \end{remark}
Applying Martingale central limit theorem, we establish the limiting distribution of the sample Gini covariance in the following theorem.  
\begin{theorem}\label{gcov1}
 Under  independence of $\bi X$ and $Y$, and conditions \textbf{C}1-\textbf{C}3,  as $\min\{n_1, n_2, ..., n_k\} \rightarrow \infty$ and $p \rightarrow \infty$, we have
\begin{align*}
\dfrac{\mbox{gCov}_n(\bi X, Y)}{\sigma_0} \stackrel{{d}}{\longrightarrow} {\cal N}(0,1),\ 
\end{align*}
where $\sigma^2_0 =(\sum_{k=1}^K \hat{p}^2_k {n_k \choose 2}^{-1}-{n\choose 2}^{-1})\E d^2(\bi X_1,\bi X_2)$ is the variance of $\mbox{gCov}_n(\bi X, Y)$.
\end{theorem}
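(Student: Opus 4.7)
The plan is to recast $\mbox{gCov}_n(\bi X, Y)$ as a single weighted, first-order degenerate U-statistic of degree two and then apply a martingale central limit theorem. Starting from the representation (\ref{dgcovn}), I would collect the coefficient of each pair $(i,j)$ to write
\[
\mbox{gCov}_n(\bi X, Y) \;=\; \sum_{1 \leq i < j \leq n} w_{ij}\, d(\bi X_i, \bi X_j),
\]
where $w_{ij} = \binom{n}{2}^{-1}$ when $Y_i \neq Y_j$ and $w_{ij} = \binom{n}{2}^{-1} - \hat p_k \binom{n_k}{2}^{-1}$ when $Y_i = Y_j = L_k$. Under independence, $\bi X_1,\dots,\bi X_n$ are i.i.d.\ copies of $\bi X$, and by the orthogonality properties of $d$ stated in Lemmas~\ref{prop_d}--\ref{prop_eta} (in particular $\E[d(\bi X_1,\bi X_2)\mid \bi X_1]=0$) the kernel is first-order degenerate. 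A direct calculation of $\E[\mbox{gCov}_n^2]$, in which all off-diagonal terms vanish, reduces to $(\sum_{i<j} w_{ij}^2)\E d^2(\bi X_1,\bi X_2)$ and yields the stated variance $\sigma_0^2 = \big(\sum_k \hat p_k^2\binom{n_k}{2}^{-1} - \binom{n}{2}^{-1}\big)\E d^2(\bi X_1,\bi X_2)$.

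Next I would condition on the labels $Y_1,\dots,Y_n$ (harmless under independence) and introduce the filtration $\mathcal F_j = \sigma(\bi X_1,\dots,\bi X_j)$. Writing
\[
\mbox{gCov}_n(\bi X, Y) \;=\; \sum_{j=2}^n D_{n,j}, \qquad D_{n,j} \;=\; \sum_{i=1}^{j-1} w_{ij}\, d(\bi X_i, \bi X_j),
\]
the degeneracy gives $\E[d(\bi X_i,\bi X_j)\mid \mathcal F_{j-1}]=0$ for $i<j$, so $(D_{n,j}, \mathcal F_j)$ is a martingale difference array. Asymptotic normality follows from the standard martingale CLT once I verify (i) $V_n := \sigma_0^{-2}\sum_{j=2}^n \E[D_{n,j}^2 \mid \mathcal F_{j-1}] \stackrel{p}{\longrightarrow} 1$, and (ii) a Lyapunov-type bound $\sigma_0^{-4}\sum_{j=2}^n \E D_{n,j}^4 \to 0$, which implies the conditional Lindeberg condition.

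For (ii), expanding $\E D_{n,j}^4$ and using the degeneracy to kill cross products, the surviving contributions are of order $\E d^4(\bi X_1,\bi X_2)$ and $(\E d^2(\bi X_1,\bi X_2))^2$; the uniform $O(1/n^2)$ scale of the weights together with condition \textbf{C}2 then gives the required bound. For (i), the conditional variance decomposes as
\[
\E[D_{n,j}^2 \mid \mathcal F_{j-1}] = \sum_{i<j} w_{ij}^2\, \E\!\big[d^2(\bi X_i,\bi X_j)\mid \bi X_i\big] + \sum_{\substack{i_1 \neq i_2 \\ i_1,i_2<j}} w_{i_1 j} w_{i_2 j}\, g(\bi X_{i_1},\bi X_{i_2}),
\]
with $g(\bi x,\bi y) := \E[d(\bi x,\bi X)\,d(\bi y,\bi X)]$. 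The diagonal piece has mean exactly $\sigma_0^2$, so the task is to show its variance is $o(\sigma_0^4)$; this again reduces to $\E d^4 / \{n(\E d^2)^2\}$ and is absorbed by \textbf{C}2. The off-diagonal piece has mean zero, and its $L^2$ norm is controlled by $\E g(\bi X_1,\bi X_2)^2 = \E d(\bi X_1,\bi X_3) d(\bi X_2,\bi X_3) d(\bi X_1,\bi X_4) d(\bi X_2,\bi X_4)$, which is precisely the quantity assumed negligible in \textbf{C}3.

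The main obstacle is the combinatorial bookkeeping in step (i): because $w_{ij}$ depends on whether the two observations share a class label, one must track $O(n^4)$ quadruples split into several types (all in one class, two-versus-two across classes, singletons, etc.) and show that each contribution is absorbed by \textbf{C}2 or \textbf{C}3 after dividing by $\sigma_0^4$. Condition \textbf{C}1 guarantees that all the fourth-order moments arising in the expansions are finite; once the bookkeeping is complete, the martingale CLT delivers the claimed ${\cal N}(0,1)$ limit for $\mbox{gCov}_n(\bi X, Y)/\sigma_0$.
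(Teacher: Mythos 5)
Your proposal is correct and follows essentially the same route as the paper: the same martingale difference array $D_{n,j}=\sum_{i<j}w_{ij}d(\bi X_i,\bi X_j)$ with respect to $\mathscr{F}_j=\sigma(\bi X_1,\dots,\bi X_j)$, the same split of the conditional variance into a $\xi$-type diagonal part (controlled by \textbf{C}2) and an $\eta$-type off-diagonal part (controlled by \textbf{C}3), and the same Lyapunov fourth-moment bound. The only difference is presentational: your unified weight notation $w_{ij}$ packages in one formula what the paper writes out case by case for $K=3$.
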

Theorem \ref{gcov1} reveals that a degenerate $U$-statistic admits a normal limit due to the high dimensionality. 
This is surprisingly inspiring to deal with problems which can be estimated by $U$-statistics in high dimension. 
%
%\begin{remark}
%From Equation (\ref{eqn:vargcov1}), $\sigma_0^2 = ((2K-2)/n^2+o(n^{-2}))\E d^2(\bi X_1, \bi X_2)$.  The convergence rate of $\mbox{gCov}_n(\bi X, Y)$ is $n$, rather than $\sqrt{n}$ in the usual CLT. 
%\end{remark}
%
 
To make inference feasible,   we need to estimate $\sigma_0^2$.  A consistent estimator $\hat{\sigma}^2_0$ is 
\begin{align}\label{eqn:sigma0}
\hat{\sigma}^2_0=\bigg(\sum_{k=1}^K \hat{p}^2_k {n_k \choose 2}^{-1}-{n\choose 2}^{-1}\bigg)V^{2}_n(\bi X),
\end{align}
where  $V^{2}_n(\bi X)$ is the bias-corrected estimator for the squared distance variance in \cite{Szekely2013b}. That is,
\begin{align*}
V^{2}_n(\bi X)=\dfrac{1}{n(n-3)}\sum_{1 \leq k\neq l\leq n}A^{2}_{k,l}
\end{align*}
with $A_{k,l}$ being the centered sample distance, which is 
\begin{align*}
A_{k,l}=&\|\bi X_k-\bi X_l\|-\dfrac{1}{n-2}\sum_{i=1}^n \|\bi X_i-\bi X_l\|-\dfrac{1}{n-2}\sum_{j=1}^n \|\bi X_k-\bi X_j\|\\
&+\dfrac{1}{(n-1)(n-2)}\sum_{1 \leq i, j \leq n}\|\bi X_i-\bi X_j\|.
\end{align*}
\begin{theorem}\label{gcov2}
Under  independence of $\bi X$ and $Y$, and conditions \textbf{C}1-\textbf{C}3,  as $\min\{n_1, n_2, ..., n_k\} \rightarrow \infty$ and $p \rightarrow \infty$, we have
\begin{align*}
\dfrac{\mbox{gCov}_n(\bi X, Y)}{\hat{\sigma}_0} \stackrel{{d}}{\longrightarrow} {\cal N}(0,1).
\end{align*}
\end{theorem}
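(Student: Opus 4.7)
The plan is to derive Theorem \ref{gcov2} from Theorem \ref{gcov1} by a Slutsky-type argument, so the task reduces to showing $\hat{\sigma}_0^2/\sigma_0^2 \stackrel{p}{\longrightarrow} 1$. Since the combinatorial prefactor $c_n := \sum_{k=1}^K \hat{p}^2_k {n_k \choose 2}^{-1}-{n\choose 2}^{-1}$ appears in both $\sigma_0^2$ and $\hat{\sigma}_0^2$ and is strictly positive almost surely, it cancels in the ratio. Hence it suffices to prove
\begin{equation*}
\frac{V^2_n(\bi X)}{\E d^2(\bi X_1,\bi X_2)} \stackrel{p}{\longrightarrow} 1.
\end{equation*}
Writing this as $\E\{V_n^2(\bi X)\}/\E d^2(\bi X_1,\bi X_2) + o_p(1)$, I would split the verification into a bias piece and a variance piece.

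For the bias piece, I would invoke the $U$-centering construction of Sz\'ekely--Rizzo \cite{Szekely2013b}: the quantity $V^2_n(\bi X)$ is designed to be an unbiased estimator of the expected squared double-centered distance, so $\E\{V^2_n(\bi X)\}=\E d^2(\bi X_1,\bi X_2)$ for every $n\geq 4$ and every $p$. This identity is algebraic and holds regardless of the dimension, which is important here since $p$ is allowed to diverge. For the variance piece, I would use Chebyshev's inequality and show
\begin{equation*}
\frac{\mathrm{Var}\{V^2_n(\bi X)\}}{\bigl(\E d^2(\bi X_1,\bi X_2)\bigr)^2} \longrightarrow 0.
\end{equation*}

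To estimate this variance I would expand $V^2_n(\bi X)= [n(n-3)]^{-1}\sum_{k\neq l}A_{k,l}^2$ as a degree-4 symmetric $U$-statistic (the sample $U$-centering of $\|\bi X_k-\bi X_l\|$ introduces four indices per term) and perform its Hoeffding decomposition, treating $\E d^2(\bi X_1,\bi X_2)$ as the leading order term. The cross-moments that appear after expansion are all of the form $\E d^4(\bi X_1,\bi X_2)$ and $\E\{d(\bi X_1,\bi X_3)d(\bi X_2,\bi X_3)d(\bi X_1,\bi X_4)d(\bi X_2,\bi X_4)\}$, together with fourth-moment pieces involving $\|\bi X\|$ coming from the gap between the sample $U$-centering $A_{k,l}$ and the population double-centered distance $d(\bi X_k, \bi X_l)$. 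The former two are made negligible by conditions \textbf{C}2 and \textbf{C}3 (after normalization by $(\E d^2)^2$), while the latter terms are controlled by \textbf{C}1.

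The main obstacle is bookkeeping in this variance calculation: $V_n^2(\bi X)$ is not literally $\binom{n}{2}^{-1}\sum_{i<j} d^2(\bi X_i,\bi X_j)$ because sample means replace population means inside $A_{k,l}$, so the decomposition contains many residual cross-terms that must be shown to be of lower order than $(\E d^2(\bi X_1,\bi X_2))^2$. I would handle this by exploiting the orthogonality properties of $d(\bi X_1,\bi X_2)$ stated in Lemmas \ref{prop_d} and \ref{prop_eta} (which were already used in the proof of Theorem \ref{gcov1}): most of these residual sums have vanishing conditional expectations, so their second moments collapse onto fourth-moment and product-of-$d$ quantities that are exactly those appearing in \textbf{C}2 and \textbf{C}3. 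Once the variance is shown to be $o((\E d^2)^2)$, Chebyshev plus the unbiasedness identity give $V_n^2(\bi X)/\E d^2(\bi X_1,\bi X_2)\to 1$ in probability, and then Slutsky combined with Theorem \ref{gcov1} yields the claim.
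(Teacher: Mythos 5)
Your reduction is exactly the route the paper takes (implicitly): Theorem \ref{gcov2} follows from Theorem \ref{gcov1} by Slutsky once $\hat{\sigma}_0^2/\sigma_0^2 \to 1$ in probability, and since the combinatorial prefactor cancels this amounts to ratio-consistency of $V^2_n(\bi X)$ for $\E d^2(\bi X_1,\bi X_2)$. The paper gives no written proof of that consistency --- it only cites \cite{Szekely2013b} for the bias-corrected estimator --- so your sketch (exact unbiasedness of the $U$-centered statistic for $n\geq 4$, plus a Chebyshev bound whose fourth-order cross-moments are controlled by \textbf{C}1--\textbf{C}3 in the manner of \cite{Gao2021} and Lemma \ref{ncond}) is a correct completion of the step the authors leave unstated.
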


The estimators in (\ref{Ustat}) are $U$-statistics and hence the ratio is consistent with $\hat{\Delta}/\Delta \to 1$ in probability. By  applying Slutsky's theorem, we have the CLT for the Gini correlation. 
\begin{corollary}\label{gcor_an}
 Under  independence of $\bi X$ and $Y$, and conditions \textbf{C}1-\textbf{C}3,  as $\min\{n_1, n_2, ..., n_k\} \rightarrow \infty$ and $p \rightarrow \infty$, we have
\begin{align*}
\dfrac{\hat{\Delta}}{\hat{\sigma}_0} \mbox{gCor}_n(\bi X, Y) \stackrel{{d}}{\longrightarrow} {\cal N}(0,1).
\end{align*}
\end{corollary}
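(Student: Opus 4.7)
The plan is to observe that the claim is essentially an algebraic rewriting of Theorem \ref{gcov2}, combined with a routine consistency argument for the denominator $\hat{\Delta}$. By the very definition of the sample Gini correlation in (\ref{gcor}), on the event $\{\hat{\Delta}\neq 0\}$ we have the identity
\begin{equation*}
\frac{\hat{\Delta}}{\hat{\sigma}_0}\,\mbox{gCor}_n(\bi X, Y) \;=\; \frac{\hat{\Delta}}{\hat{\sigma}_0}\cdot\frac{\mbox{gCov}_n(\bi X, Y)}{\hat{\Delta}} \;=\; \frac{\mbox{gCov}_n(\bi X, Y)}{\hat{\sigma}_0}.
\end{equation*}
The right-hand side is precisely the standardized statistic whose ${\cal N}(0,1)$ limit is the content of Theorem \ref{gcov2}, so the desired convergence in distribution follows immediately once the cancellation is justified.

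To make the cancellation rigorous, I would note that $\hat{\Delta}=U_n$ is an unbiased $U$-statistic for $\Delta>0$, so that under condition \textbf{C}1 a Hoeffding-type variance bound yields $\hat{\Delta}/\Delta \to 1$ in probability. In particular $P(\hat{\Delta}>0)\to 1$, so the identity above holds on a sequence of events with probability tending to one. One can then phrase the argument formally via Slutsky's theorem: combine the in-probability convergence of $\hat{\Delta}/\Delta$ with the convergence in distribution of $\mbox{gCov}_n(\bi X,Y)/\hat{\sigma}_0$ from Theorem \ref{gcov2}, using that the ratio $\Delta/\hat{\Delta}$ also converges in probability to $1$.

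Because the nontrivial CLT is already packaged in Theorems \ref{gcov1} and \ref{gcov2}, there is no genuine obstacle in the corollary; the only point that merits brief care is that the dimensionality $p$ is diverging, so $\Delta$ generally grows with $p$ and the consistency statement must be read in the relative sense $\hat{\Delta}/\Delta\to 1$ rather than as convergence to a fixed constant. This relative consistency is immediate from $\mathrm{Var}(U_n)/\Delta^2\to 0$, which is a straightforward consequence of \textbf{C}1 and the variance decomposition of a $U$-statistic.
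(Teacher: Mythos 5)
Your proposal is correct and follows essentially the same route as the paper, which likewise cites the ratio-consistency $\hat{\Delta}/\Delta \to 1$ and Slutsky's theorem to pass from Theorem \ref{gcov2} to the corollary. Your observation that the $\hat{\Delta}$ factors cancel exactly, so that the statement is literally the conclusion of Theorem \ref{gcov2} once $P(\hat{\Delta}>0)\to 1$ is secured, is if anything a slightly cleaner justification than the paper's brief Slutsky invocation, and your remark that consistency must be read in the relative sense $\hat{\Delta}/\Delta\to 1$ (since $\Delta$ may grow with $p$) is a worthwhile point of care.
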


From the result of (\ref{eqn:qchisq})  and  Corollary \ref{gcor_an}, we see that when $\bi X$ and $Y$ are independent, as the dimensionality of the numerical variable goes large and under some conditions on the fourth moment,  the complicate quadratic form of normal distributions converges to a normal distribution.

%%%%%%%%%%%%%%%%%%%%%%%%%%%%%%%%%%%%%%%%%%%%%

\subsection{High-dimensional K-sample test}

These established CLTs  can be applied to test  the independence of $\bi X$ and $Y$.  We will use the CLT for the Gini covariance to do the test. The one based on the Gini correlation is asymptotically equivalent. 

The independence test is stated as
\begin{equation}\label{test}
 {\cal H}_0: \mbox{gCov}(\bi X, Y) = 0,\;\;\;\; \mbox{vs}\;\;\;\; {\cal H}_1: \mbox{gCov}(\bi X, Y) >  0. 
 \end{equation}
Note that the null hypothesis of the test  in (\ref{test})  is equivalent to the null of the $K$-sample test   
 \begin{equation*}\label{Ktest}
 {\cal H}_0^\prime: F_1 = F_2 =...=F_K=F. 
 \end{equation*}
In the $K$ sample test, we can view sample point $(\bi X_i,  Y_i)$ in such way. $Y_i$ is the class label of $\bi X_i$.  $Y_i=L_k$ indicates  that $\bi X_i$ is drawn from $F_k$.  The pooled sample ${\cal D} ={\cal D}_1\cup {\cal D}_2...\cup {\cal D}_K$ has the distribution $F$, which is the average distribution of $F_k$'s.

By Theorem \ref{gcov1}, we can
reject ${\cal H}_0$ or ${\cal H}_0^\prime$  if $\mbox{gCov}_n(\bi X, Y)>Z_{\alpha}\hat{\sigma}_0$ at level $\alpha$, where $Z_{\alpha}$ is the $(1-\alpha)100\% $ percentile of the standard normal distribution. 

For $K=2$, the two sample problem, the proposed test is asymptotically equivalent to the test based on distance covariance because $gCov(\bi X,Y) = dCov(\bi X,Y)/\sqrt{dCov(Y,Y)}$. This is the result of Remark 9 in \cite{Dang2021}.  And hence two test statistics estimate a same population quantity.  They are also asymptotically equivalent to Sz\'{e}kely's energy test \cite{Szekely04, Baringhaus2004} that is based on energy statistic between $F_1$ and $F_2$.

Theorem \ref{gcov1} allows us to avoid computation burden of the permutation tests. As demonstrated in the simulation, the test based on the limiting normality is more powerful than the permutation tests. The power function for the proposed test is 
\begin{align*}
P_n(\alpha)=P(\mbox{gCov}_n(\bi X, Y)>Z_{\alpha}\hat{\sigma}_0\bigm| {\cal H}_1).
\end{align*}
The test consistency is established in the below theorem. 
\begin{theorem}\label{power}
For any alternative ${\cal H}_1$  satisfying conditions \textbf{C}1 and \textbf{C}4,   as  $\min\{n_1, n_2, ..., n_k\} \rightarrow \infty$, $p_k>0$ and $p \rightarrow \infty$, we have
 $$P_n(\alpha)=P(\mbox{gCov}_n(\bi X, Y)>Z_{\alpha}\hat{\sigma}_0\bigm| {\cal H}_1) \to 1.$$
\end{theorem}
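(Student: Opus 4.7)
The plan is to show $P(\mbox{gCov}_n(\bi X, Y) > Z_\alpha \hat\sigma_0 \mid \mathcal{H}_1) \to 1$ by reducing to two estimates: (I) $\mbox{gCov}_n(\bi X, Y) - \mbox{gCov}(\bi X, Y) = o_p(\mbox{gCov}(\bi X, Y))$, i.e.\ the statistic is relatively consistent under the alternative, and (II) $\hat\sigma_0 = o_p(\mbox{gCov}(\bi X, Y))$, so the critical cutoff $Z_\alpha \hat\sigma_0$ is eventually negligible compared with the true signal $\mbox{gCov}(\bi X, Y) > 0$. Together these imply that, with probability tending to one, $\mbox{gCov}_n(\bi X, Y) \ge \tfrac{1}{2}\mbox{gCov}(\bi X, Y) > Z_\alpha \hat\sigma_0$, whence $P_n(\alpha) \to 1$.

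For claim (I), I would apply Chebyshev's inequality. Since $\mbox{gCov}_n = \hat\Delta - \sum_k \hat p_k \hat\Delta_k$ is a linear combination of second-order $U$-statistics, standard Hoeffding-decomposition variance bounds combined with the fourth-moment condition \textbf{C}1 yield $\mathrm{Var}(\mbox{gCov}_n) \le C/n$ for a constant $C$ depending only on the moments controlled by \textbf{C}1. Then
\begin{equation*}
P\!\left(|\mbox{gCov}_n(\bi X,Y) - \mbox{gCov}(\bi X,Y)| > \tfrac{1}{2}\mbox{gCov}(\bi X,Y)\right) \le \frac{4\,\mathrm{Var}(\mbox{gCov}_n)}{\mbox{gCov}(\bi X,Y)^2} \le \frac{4C}{n\,\mbox{gCov}(\bi X,Y)^2},
\end{equation*}
which tends to zero by \textbf{C}4 since $n\,\mbox{gCov}(\bi X,Y)^2 = (\sqrt{n}\,\mbox{gCov}(\bi X,Y))^2 \to \infty$. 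For claim (II), expand $\hat\sigma_0^2 = \bigl(\sum_k \hat p_k^2 \binom{n_k}{2}^{-1} - \binom{n}{2}^{-1}\bigr) V_n^2(\bi X)$. Since $n_k/n \to p_k > 0$, the leading scalar coefficient is $\Theta(1/n^2)$, and the bias-corrected distance variance $V_n^2(\bi X)$ of Sz\'ekely and Rizzo~\cite{Szekely2013b} is a consistent estimator of $\mathbb{E} d^2(\bi X_1, \bi X_2)$ under \textbf{C}1, hence $O_p(1)$ at the appropriate scale. Thus $\hat\sigma_0 = O_p(1/n)$ and $\hat\sigma_0/\mbox{gCov}(\bi X,Y) = O_p\!\bigl(1/(n\,\mbox{gCov}(\bi X,Y))\bigr) \to 0$, again by \textbf{C}4.

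The main obstacle is that under $\mathcal{H}_1$ both $\mathrm{Var}(\mbox{gCov}_n)$ and $\mathbb{E} d^2(\bi X_1, \bi X_2)$ involve the class-conditional distributions $F_k$, which generally depend on $p$, so one must argue that the moments appearing in the bound $\mathrm{Var}(\mbox{gCov}_n) \le C/n$ and in the limit of $V_n^2(\bi X)$ are of an order dominated by the signal strength $\sqrt{n}\,\mbox{gCov}(\bi X, Y) \to \infty$ guaranteed by \textbf{C}4. The cleanest route is to lift \textbf{C}1 from the pooled $\bi X$ to each $\bi X^{(k)}$ via $F = \sum_k p_k F_k$, and then control the first-order Hoeffding projection variances and $\mathbb{E} d^2$ by $\mathbb{E}\|\bi X_1 - \bi X_2\|^2 \le 4\mathbb{E}\|\bi X\|^2$, so that after normalization the ratios in (I) and (II) are both $o_p(1)$.
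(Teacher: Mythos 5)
Your proposal is correct and follows essentially the same route as the paper's proof: a $U$-statistic moment bound giving $\mathrm{Var}(\mbox{gCov}_n(\bi X,Y)) = O(1/n)$ so that $|\mbox{gCov}_n(\bi X,Y)-\mbox{gCov}(\bi X,Y)| = O_p(n^{-1/2}) = o_p(\mbox{gCov}(\bi X,Y))$ by \textbf{C}4, combined with $\hat{\sigma}_0 = O_p(1/n)$ (ratio consistency plus the explicit $\Theta(n^{-2})$ scalar in $\hat{\sigma}_0^2$) to show the cutoff is dominated by the signal. The only cosmetic difference is that the paper phrases the conclusion as $\mbox{gCov}_n(\bi X,Y)/\sigma_0 \to \infty$ in probability rather than your two-claim split, and, like you, it leaves the dependence of the moment bounds on $p$ essentially implicit.
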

Condition  \textbf{C}1 is the usual assumption on the finite fourth moment.  Condition \textbf{C}4, $\sqrt{n}gCov(\bi X,Y) \rightarrow \infty$, requires dependence of $\bi X$ and $Y$ cannot be too weak. We might state a local alternative as 
\begin{align*}
{\cal H}_1^\prime: \mbox{gCov}(\bi X, Y) \geq C n^{-t}, \;\;\;\mbox{ for } t < 1/2. 
\end{align*}
The proposed test is able to detect the dependence under ${\cal H}_1^\prime$ with power going to 1 as sample sizes increase.

%%%%%%%%%%%%%%%%%%%%%%%%%%%%%%%%%%%%%%%%%%%%%%%%%%%%%%
%%%%%%%%%%%%%%%%%%%%%%%%%%%%%%%%%%%%%%%%%%%%%%%%%%%%%%
%%%%%%%%%%%%%%%%%%%%%%%%%%%%%%%%%%%%%%%%%%%%%%%%%%%%%%
\section{Simulation study}\label{sec:simulationstudy}
In this section, we conduct three simulation studies to verify the theoretical properties of the standardized Gini covariance statistic and compare its performance in $K$-sample tests with others. 
\begin{figure*}[thb]
\centering
\begin{tabular}{cc}\vspace{-0.2in}
\includegraphics[width=2.4in,height=2.4in]{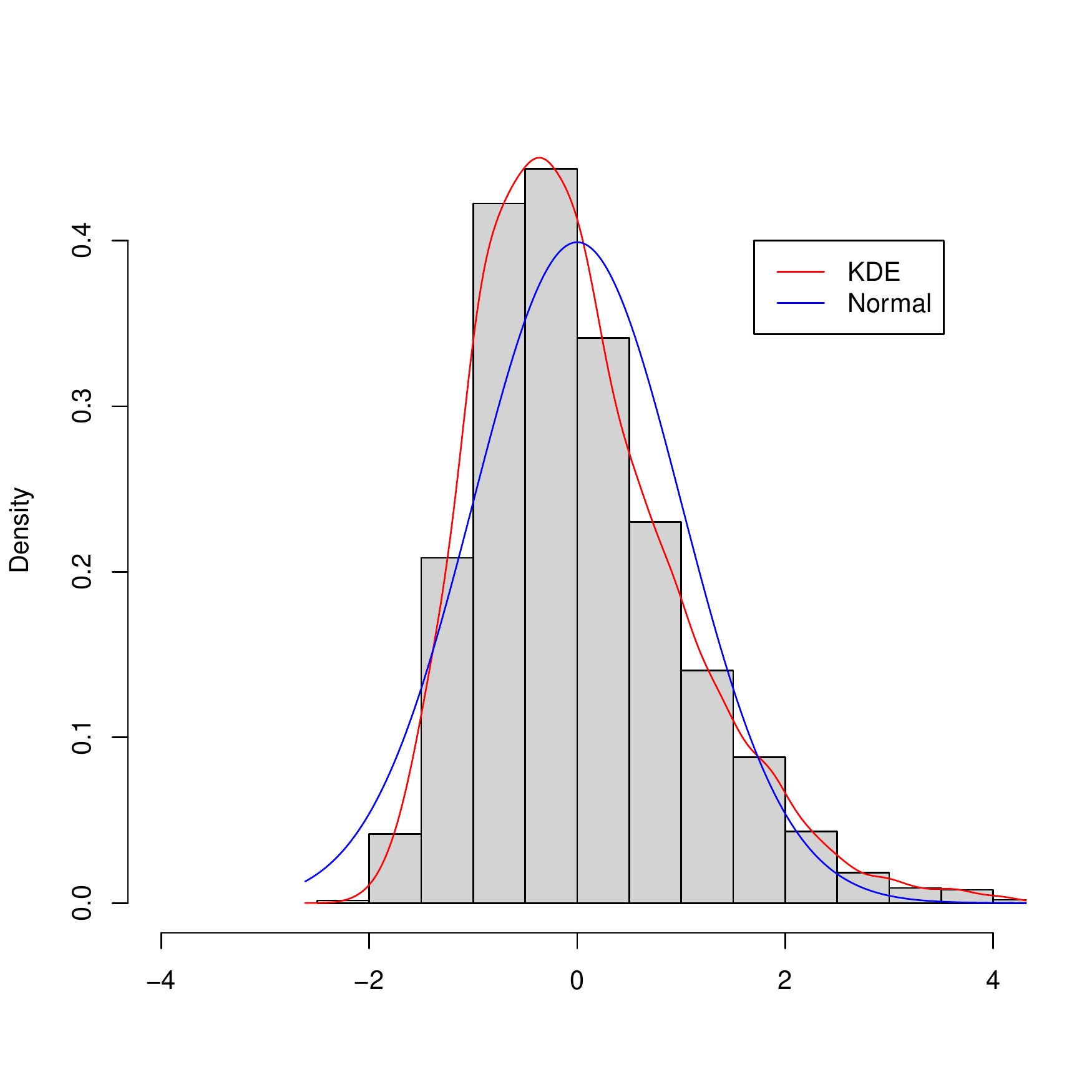} &
\includegraphics[width=2.4in,height=2.4in]{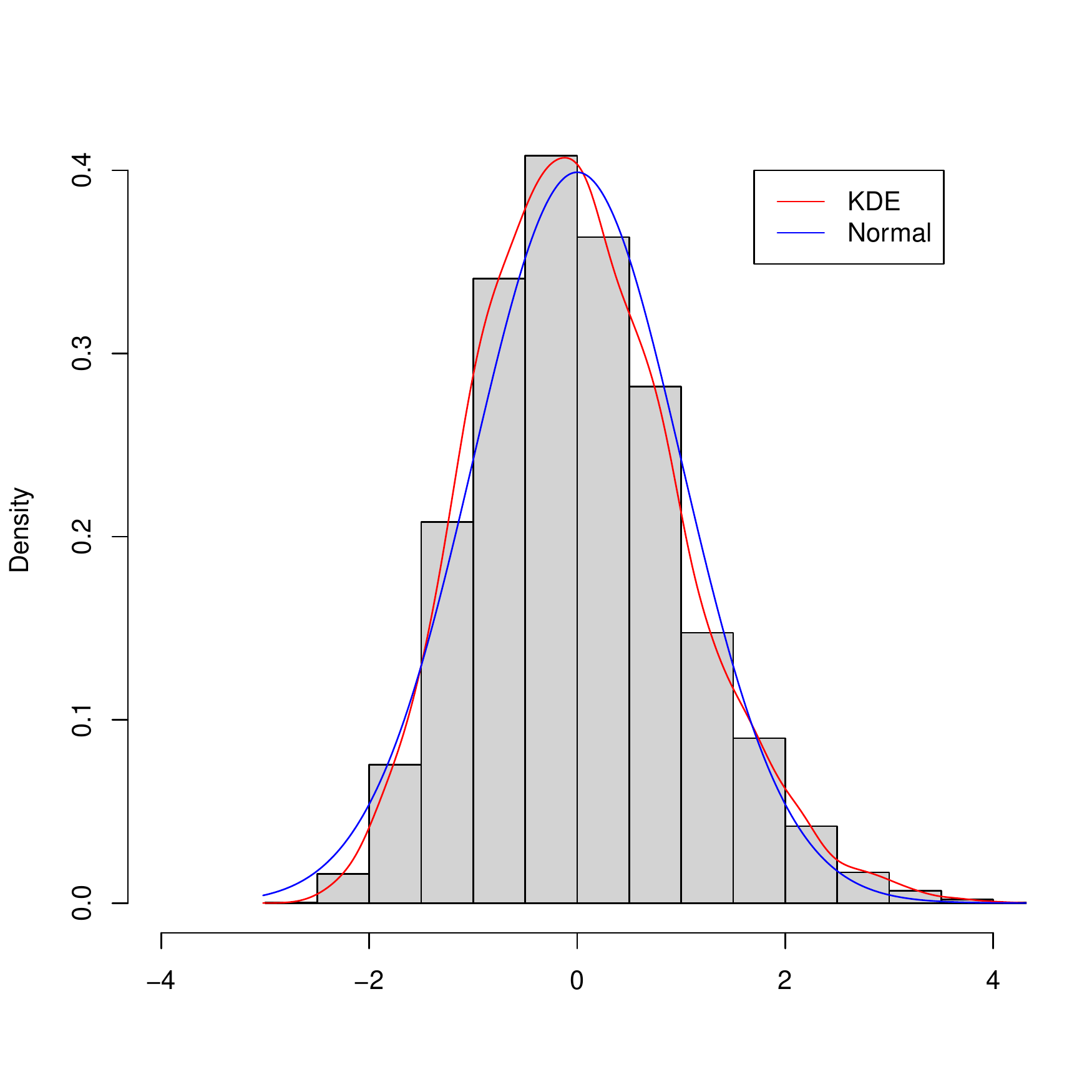}\\ 
(a) $ p=5$ & (b) $p=50$\\ \vspace{-0.2in}
\includegraphics[width=2.4in,height=2.4in]{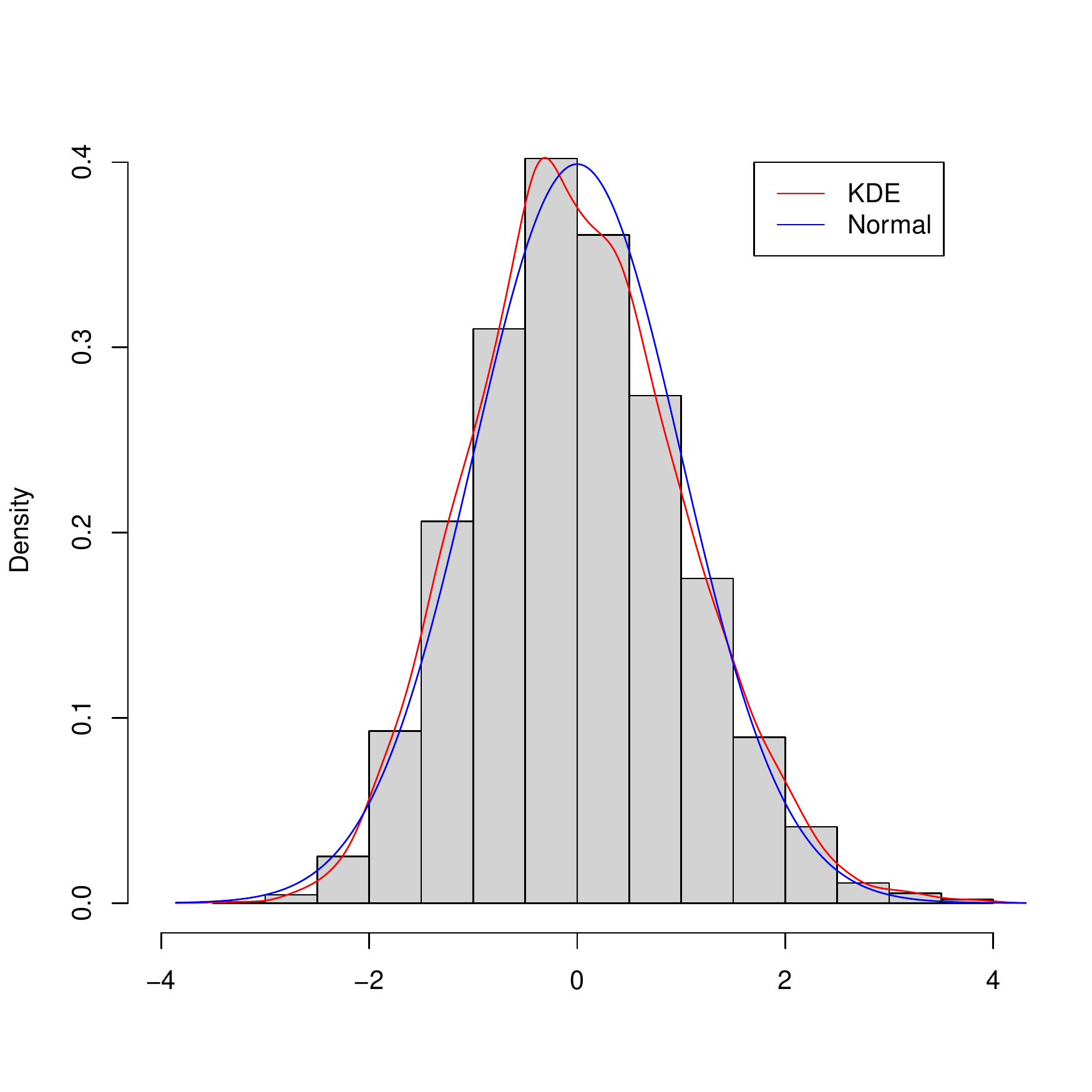} &
\includegraphics[width=2.4in,height=2.4in]{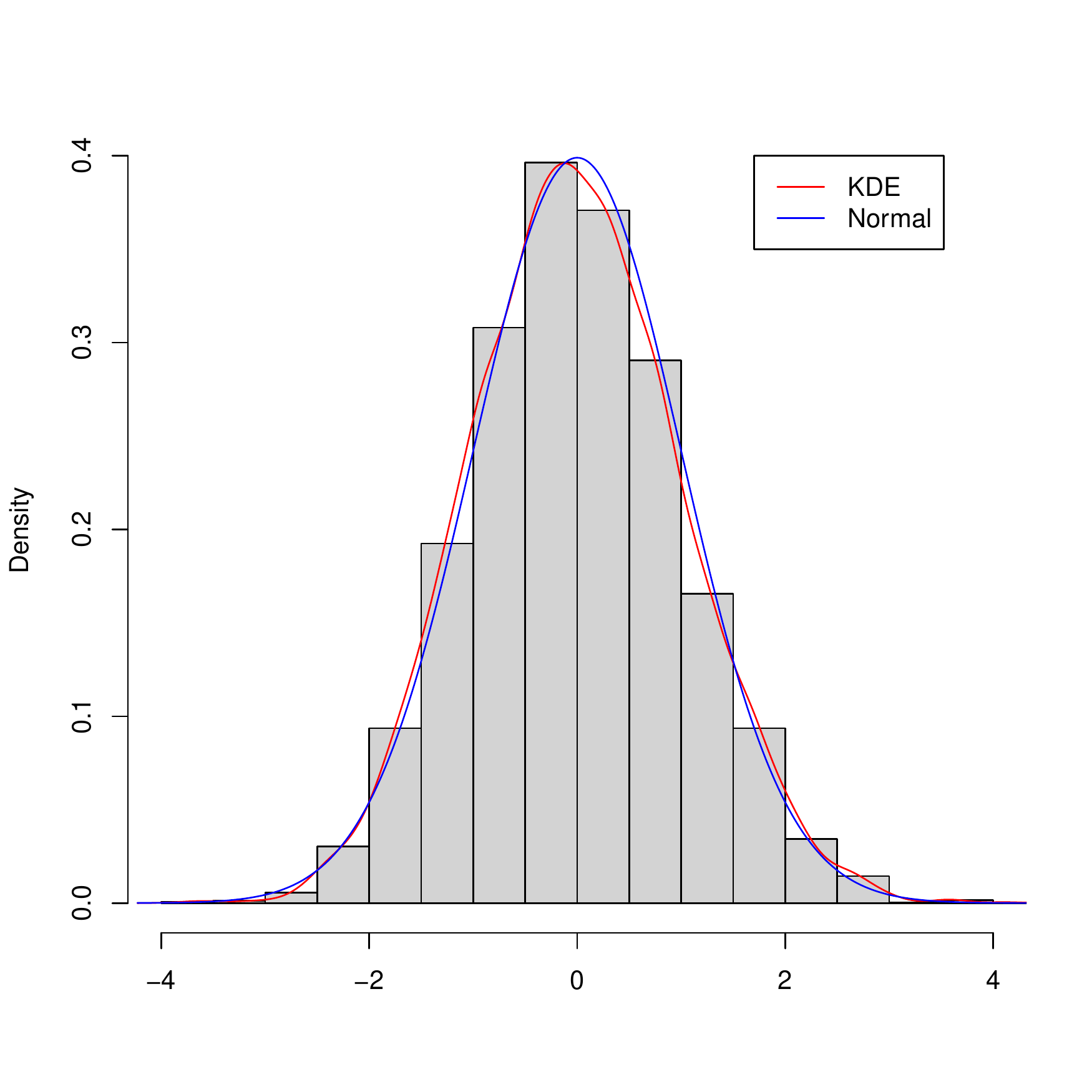}\\ 
(c) $p=200$ & (d) $p=500$
\end{tabular} \vspace{-0.1in}
\caption{Histograms of the standardized Gini covariance statistic in Example 1 with kernel density estimation curves in red and standard normal density curves in blue.  }
\label{fig:hist}
\end{figure*}

\subsection{Limiting normality}
We generate independent $K$ samples from the same multivariate normal distributions and compute the standardized Gini covariance statistic. The procedure is repeated 5000 times. The setup parameters are listed below. 
\begin{description}
\item [Example 1.]  $K=5$ samples of sizes $\bi{n}=(30,40,50,60,70)$ are generated from ${\cal N}_p(\bi 0, \bi\Sigma)$, where  $p=5, 50, 200, 500$ and $\bi \Sigma = (\Sigma_{ij}) \in \mathbb{R}^{p\times p}$ with $\Sigma_{ij} = 0.7^{|i-j|}$.  
\end{description}
For each dimension $p$, the histogram of 5000 standardized Gini covariance statistics is plotted in Figure \ref{fig:hist}.  Also the kernel density estimation (KDE) curve and the standard normal density curve are added to the histogram plot to visualize closeness between empirical density and asymptotical density functions. For $p=5$ in Figure \ref{fig:hist}(a),  the histogram is slightly right-skewed and there is some discrepancy between KDE and the normal curve. But when dimension increases, the discrepancy becomes less and diminishes as shown in Figure \ref{fig:hist}(b)-(d). We also calculate the maximum point distance between KDE and Normal density function as a measure of discrepancy in Table \ref{tab:dist}. It is clear that the difference decreases with dimensionality. Gao $et$ $al.$ \cite{Gao2021} developed the limiting normal distribution for distance correlation, so we also involve the maximum point distance between KDE for the distance measure.
Comparing with the scaled distance covariance statistic, the Gini one has a better normal approximation in each dimension.

\begin{table}[thb]
\centering
\begin{tabular}{c|cccr} \hline\hline
 Distance & $p=5$& $p=50$ &$p=200$ &$p=500$ \\ \hline
Dist(KDE$_g$, Normal) &0.1176 &0.0478& 0.0294& 0.0177\\
Dist(KDE$_d$, Normal) &0.1290 &0.0493& 0.0338& 0.0207\\ \hline\hline
\end{tabular}
\caption{The maximum point distances between the kernel density estimation function and standard normal density function. KDE$_g$ is for rescaled $gCov_n$ and KDE$_d$ for $dCov_n$. } \label{tab:dist}
\end{table}

 \subsection{Size and power in K-sample tests} \label{subsec: test}
 In this simulation, we compare five methods for $K$ sample problem. Two of them are permutation tests. The one based on distance covariance in high dimension has been studied in \cite{Zhu2021} for $K=2$. Here we examine both permutation tests for $K$ sample problem in high dimension. Five methods are
 
 \begin{description}
 \item [gCov:]  our proposed method using rescaled Gini covariance statistic and the normal percentile as the critical value. 
 \item [gCov-perm:] permutation test using Gini covariance statistic. This test is  asymptotically equivalent to the one-way DISCO method \cite{Rizzo2010}. 
 \item [dCov:] the method using rescaled distance covariance statistic using the percentile of the standard normal as the critical value \cite{Gao2021}.
 \item [dCov-perm:]  permutation test using distance covariance statistic.  
 \item [GLP:] graphic LP polynomial basis function method proposed in \cite{Mukhopadhyay2020}.  
 \end{description}

 We consider $K=3$ case in dimensions $p=200, 500$ with the equal size $\bi n = (40,40,40)$, slightly unbalanced size $\bi n = (50, 40, 30)$ and heavily unbalanced size $\bi n = (72, 36, 12)$. Let 
 \begin{align*}
 & \bi \mu_1 =  \bi 0_p; \; {\bi \Sigma}_1={\bi \Sigma} = (\Sigma_{ij}) \in \mathbb{R}^{p \times p},  \mbox{ where } \Sigma_{ij} = 0.7^{|i-j|};\\ 
 &\bi \mu_2 = (0.1\times\bi 1^T_{\beta p}, \bi 0^T_{(1-\beta)p})^T; \;\bi \Sigma_2={\bi D}_1\bi \Sigma \bi D_1\mbox{ with } \bi D_1 = \mbox{diag}(1.1\times \bi 1^T_{\beta p}, \bi 1^T_{(1-\beta)p});  \\
&\bi \mu_3 = (0.2\times \bi1^T_{\beta p}, \bi 0^T_{(1-\beta)p})^T;\; \bi \Sigma_2={\bi D}_2\bi \Sigma \bi D_2\mbox{ with } \bi D_2 = \mbox{diag}(1.2\times \bi 1^T_{\beta p}, \bi 1^T_{(1-\beta)p}). 
 \end{align*}
Here $\beta \in [0,1]$ is the proportion of the $p$ components for which 3 samples differ in mean and in variance. 
 \begin{table}[thb] 
\centering
\begin{tabular}{l  c c|c c c c c c} \hline\hline
 $p$ & $\bi n$ & method & $\beta=0$ &$\beta=.2$ &$\beta=.4$ &$\beta=.6$ &$\beta=.8$ &$\beta=1$ \\ \hline
 200 & (40,40,40)& gCov &.052 &.171&.421&.692&.864&.966\\
&&gCov-perm& .050&.123&.327&.609&.815&.942\\
&&dCov& .053 & .172&.423&.691&.867&.966\\
&&dCov-perm& .043&.159&.416&.665&.852&.954\\
&&GLP & .060 &.098&.254&.466&.720&.875\vspace{0.2cm}\\

& (50,40,30) & gCov &.065 &.183&.484&.718&.882&.949\\
&&gCov-perm& .061&.133&.402&.621&.823&.914\\
&&dCov& .068 & .170&.454&.699&.873&.948\\
&&dCov-perm& .062&.160&.417&.664&.858&.948\\
&&GLP & .069 &.096&.241&.455&.687&.845\vspace{0.2cm}\\

 &(72,36,12) &  gCov &.058 &.155&.282&.476&.632&.814\\
&&gCov-perm& .049&.110&.212&.391&.555&.749\\
&&dCov& .063 & .112&.233&.444&.606&.802\\
&&dCov-perm& .060&.104&.215&.419&.571&.780\\
&&GLP & .066 &.090&.178&.264&.403&.577\\  \hline

500 & (40,40,40)& gCov &.061 &.268&.665&.942&.997&1.00\\
&&gCov-perm& .063&.207&.587&.904&.993&1.00\\
&&dCov& .063 & .274&.667&.943&.997&1.00\\
&&dCov-perm& .060&.269&.654&.934&.998&1.00\\
&&GLP & .049 &.143&.455&.812&.971&.999 \vspace{0.2cm}\\

& (50,40,30) & gCov &.052 &.340&.801&.972&.997&.999\\
&&gCov-perm& .055&.280&.727&.950&.990&.999\\
&&dCov& .058 & .313&.776&.961&.995&.999\\
&&dCov-perm& .051&.308&.762&.956&.994&.998\\
&&GLP & .059 &.156&.428&.800&.956&.993\vspace{0.2cm}\\

 &(72,36,12) &  gCov &.051 &.231&.493&.769&.923&.979\\
&&gCov-perm& .055&.154&.399&.671&.901&.968\\
&&dCov& .054 & .175&.426&.721&.916&.978\\
&&dCov-perm& .052&.172&.420&.711&.909&.976\\
&&GLP & .047 &.109&.240&.450&.688&.853\\ \hline \hline 
\end{tabular}
\caption{Size and Power of Tests for $K=3$ samples in Example 2. }
\label{tab:k3norm}
\end{table}

 \begin{description}
\item [Example 2.]  Generate samples of $\bi X^{(1)} \sim {\cal N}_p(\bi \mu_1, \bi \Sigma_1)$, $\bi X^{(2)} \sim {\cal N}_p(\bi \mu_2, \bi \Sigma_2)$ and $\bi X^{(3)} \sim {\cal N}_p(\bi \mu_3, \bi \Sigma_3)$.  
 \end{description}
 We conduct 1000 simulations. The size and power of each test are computed and reported in Table \ref{tab:k3norm}. The column $\beta=0.0$ corresponds to the size of tests. Several observations can be drawn. All tests maintain the nominal level 5\% quite well. Permutation tests are slightly less powerful than their corresponding counterparts. GLP test is inferior to others in all cases. In the equal size case, Gini method $gCov$ produces almost the same size and power as $dCov$, which is an expected result since the Gini covariance and distance covariance are asymptotically equivalent. While in the unbalanced cases, our Gini method gains 1\% - 6\% power advantage over the distance one.  An intuitive interpretation of the advantage is that $gCov$ is a better measure than $dCov$ in unbalanced distributions as stated in the Introduction section.  
\begin{figure*}[thb]
\centering
\begin{tabular}{cc}\vspace{-0.1in}
\includegraphics[width=2.35in,height=2.35in]{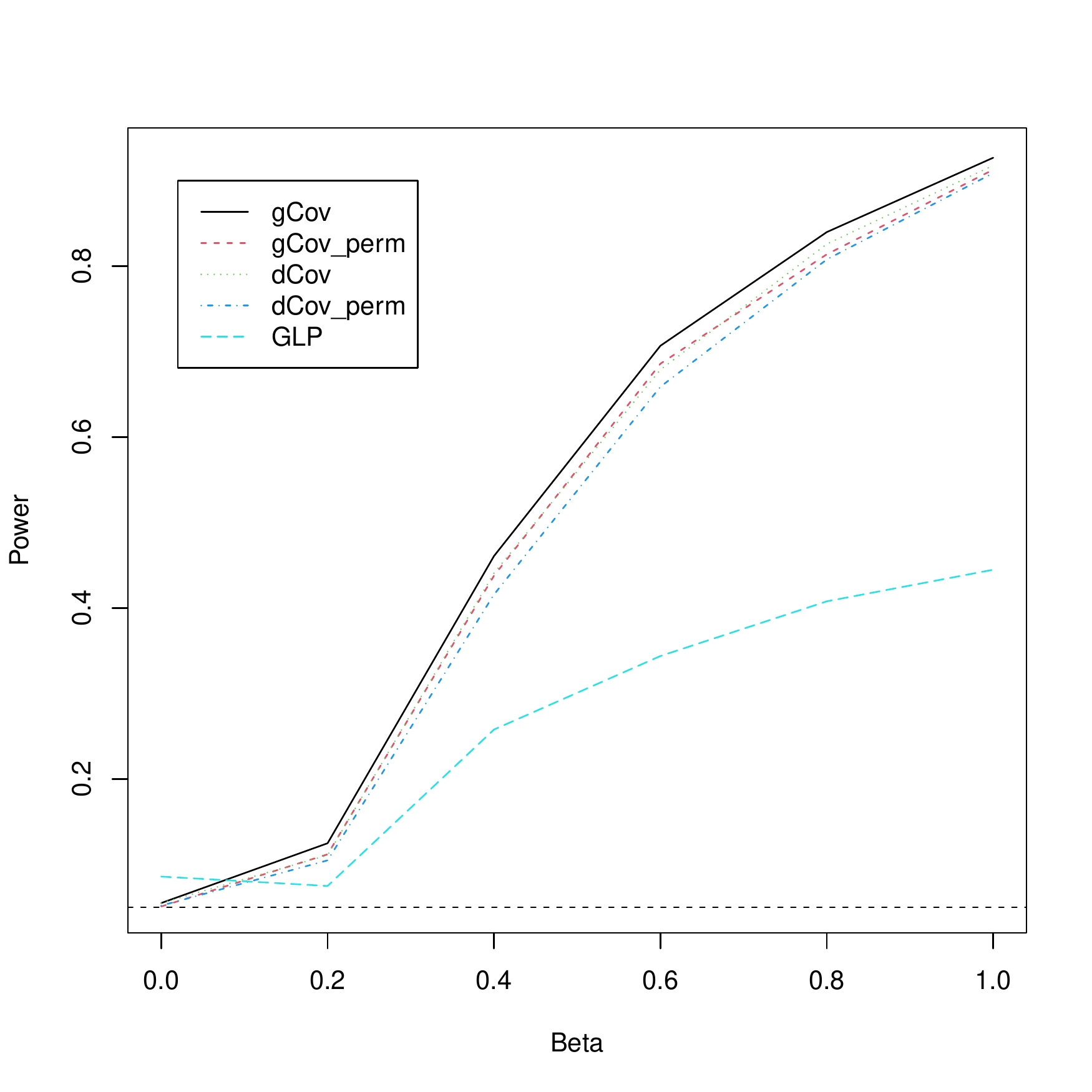} &\hspace{-0.1in}
\includegraphics[width=2.35in,height=2.35in]{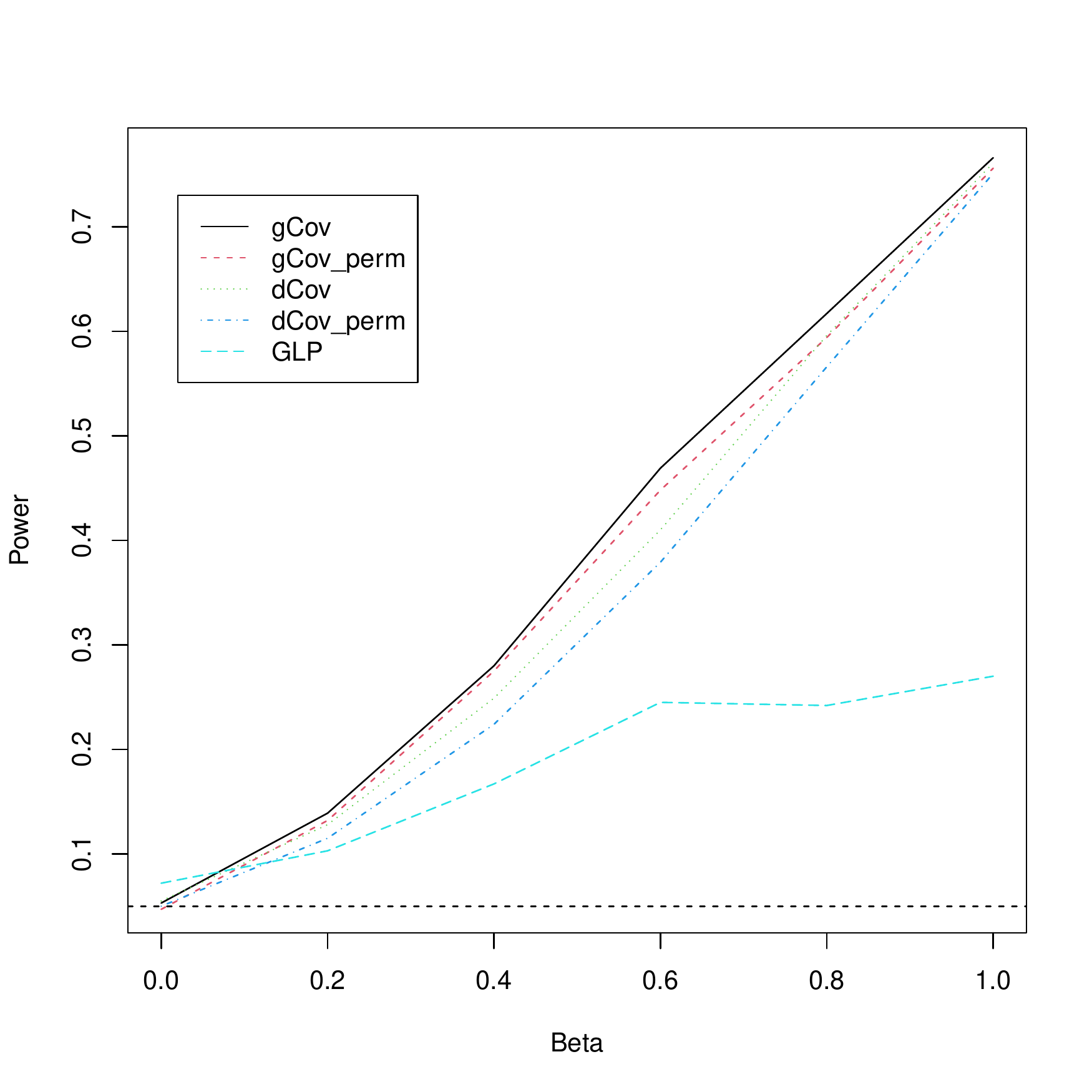}\\ 
(a) $p=200, \bi n=(50,40,30)$ & \hspace{-0.1in}(b) $p=200, \bi n=(72,36,12)$\\ \vspace{-0.1in}
\includegraphics[width=2.35in,height=2.35in]{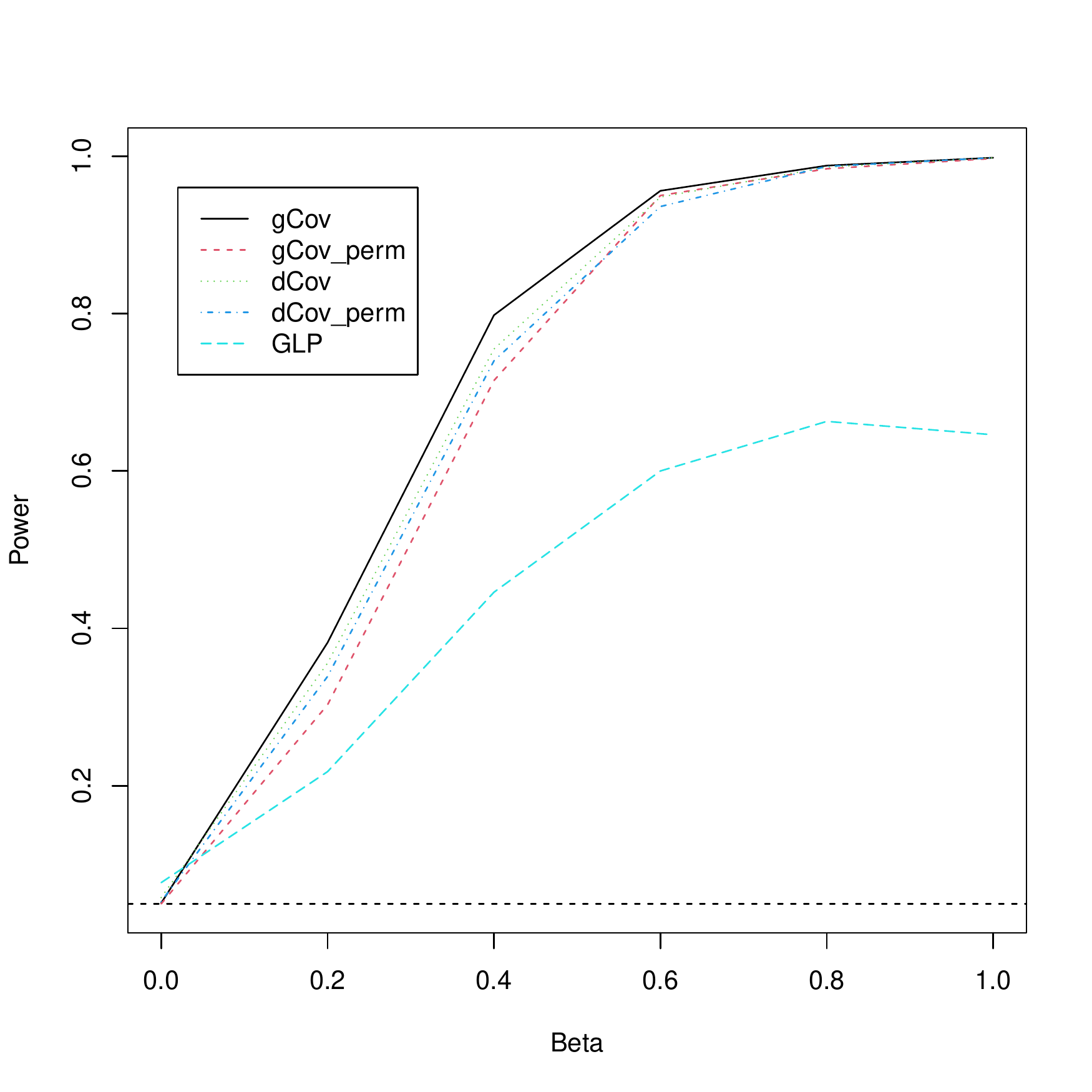} &\hspace{-0.1in}
\includegraphics[width=2.35in,height=2.35in]{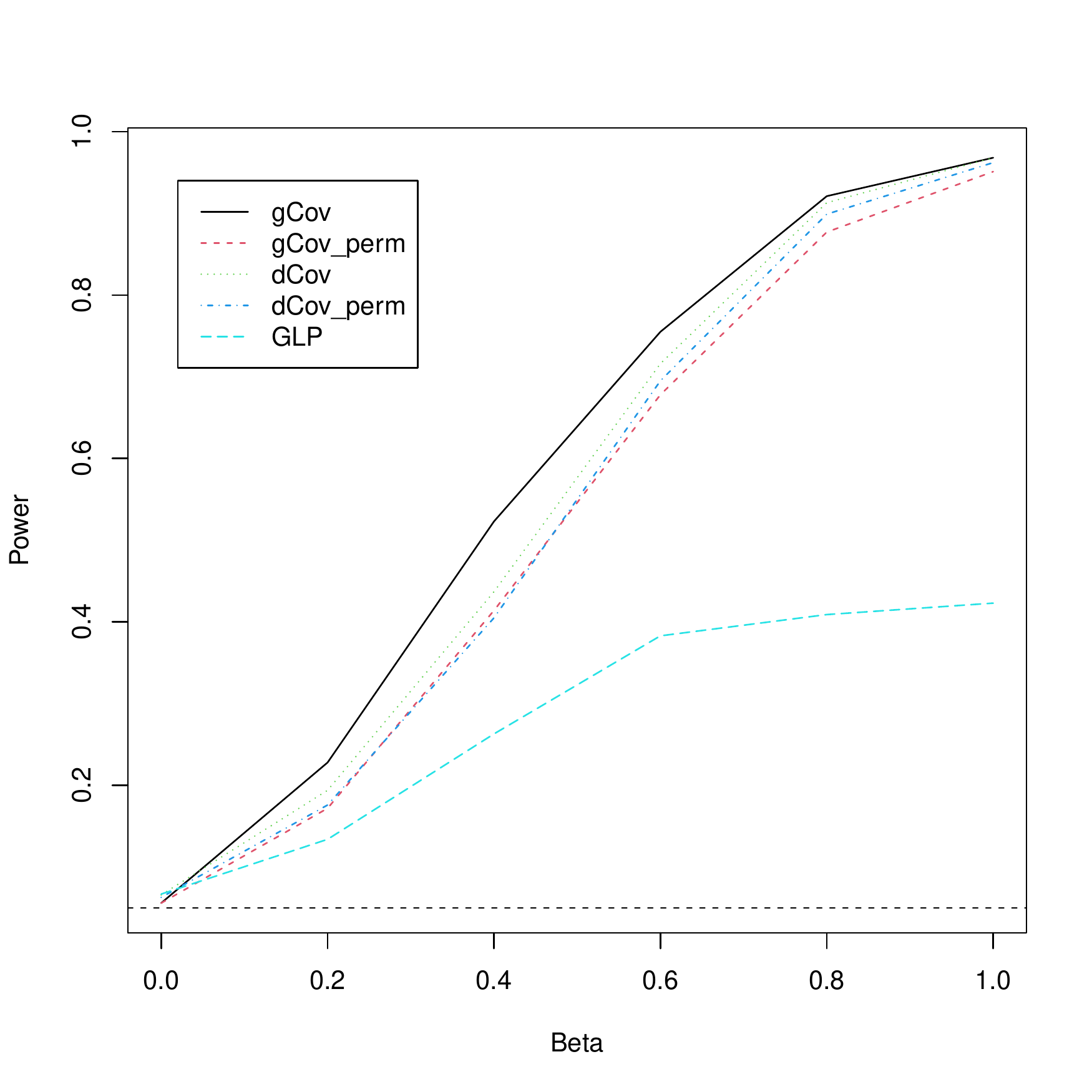}\\ 
(c) $p=500, \bi n=(50,40,30)$ &\hspace{-0.1in} (d) $p=500, \bi n=(72,36,12)$
\end{tabular} \vspace{-0.1in}
\caption{Size and power of tests in Example 3. Dashed horizontal line is the nominal level 0.05. }
\label{fig:ResExp}
\end{figure*}

 \begin{description}
 \item [Example 3.] Let $\bi Z_k = (Z_{k1}, Z_{k2},...Z_{kp})^T -\bi 1_p$, where for $k=1,2,3$ and $j=1,...,p$, $Z_{kj}$'s are i.i.d. from Exp(1). Then generate $\bi X_1 \sim \bi \Sigma_1^{1/2} \bi Z_1$, $\bi X_2 \sim \bi \Sigma_2^{1/2} \bi Z_2$, $\bi X_3 \sim \bi \Sigma_3^{1/2} \bi Z_3$  samples. 
  \end{description}
Although the distributions are not elliptically symmetric, the patterns and observations from this simulation are very similar to those in Example 2 for all tests but GLP. We present the results in Figure \ref{fig:ResExp}.  GLP seems sensitive to the asymmetry of distributions not only in terms of performance as well as in terms of computation.    The GLP algorithm includes a middle step to perform $K$-mean clustering, and that step occasionally stops especially for unbalanced sample sizes. The GLP is slightly oversized and its power is  extremely low.

%%%%%%%%%%%%%%%%%%%%%%%%%%%%%%%%%%%%%%%%%
%%%%%%%%%%%%%%%%%%%%%%%%%%%%%%%%%%%%%%%%%
\section{Real data analysis}\label{sec:realdata}
Two data sets from UCI machine learning repository \cite{Dua19} are studied for $K$ sample tests.

\subsection{LSVT voice rehabilitation data}
 The first data is LSVT Voice Rehabilitation dataset. After speech rehabilitation treatments in Parkinson's disease, 126 patients were evaluated  based on 310 attributes.  Refer to \cite{Tsanas14} for details of the data set and dysphonia measure attributes. Phonations of 42 patients were evaluated  as `acceptable', while 84 patients had `unacceptable' phonations. This data set has the dimension larger than the sample size.  Our goal is to test whether or not phonation features have a same distribution in the `acceptable' group and the `unacceptable' group, which is a $K=2$ sample problem. Before we preform the test, we do some exploratory data analysis to visualize the data in the original high dimensional space and the data projected in low dimensional space.

 \begin{figure*}[thb]
\centering
\begin{tabular}{cc}\vspace{-0.1cm}
\includegraphics[width=2.35in,height=2.35in]{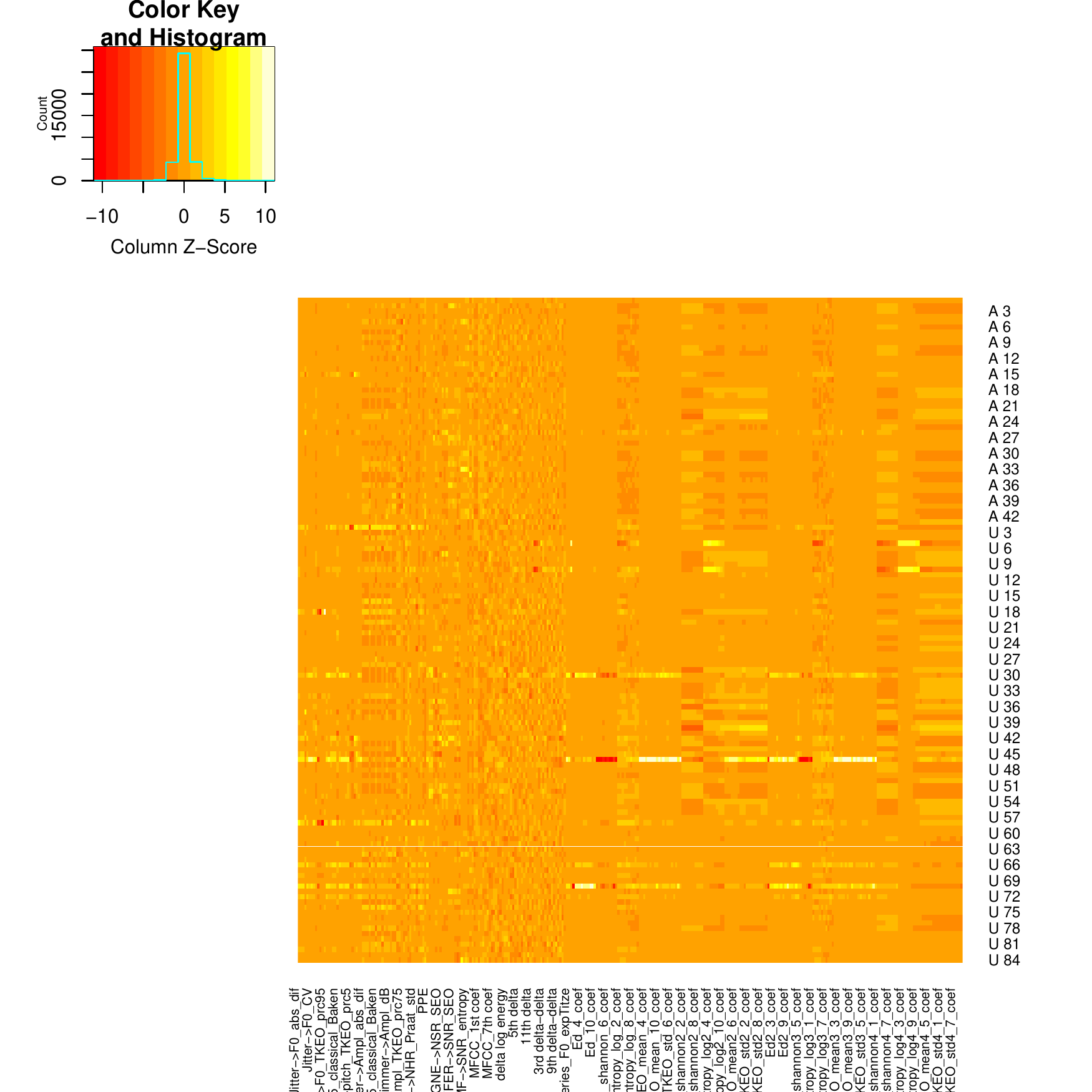} &
\includegraphics[width=2.35in,height=2.35in]{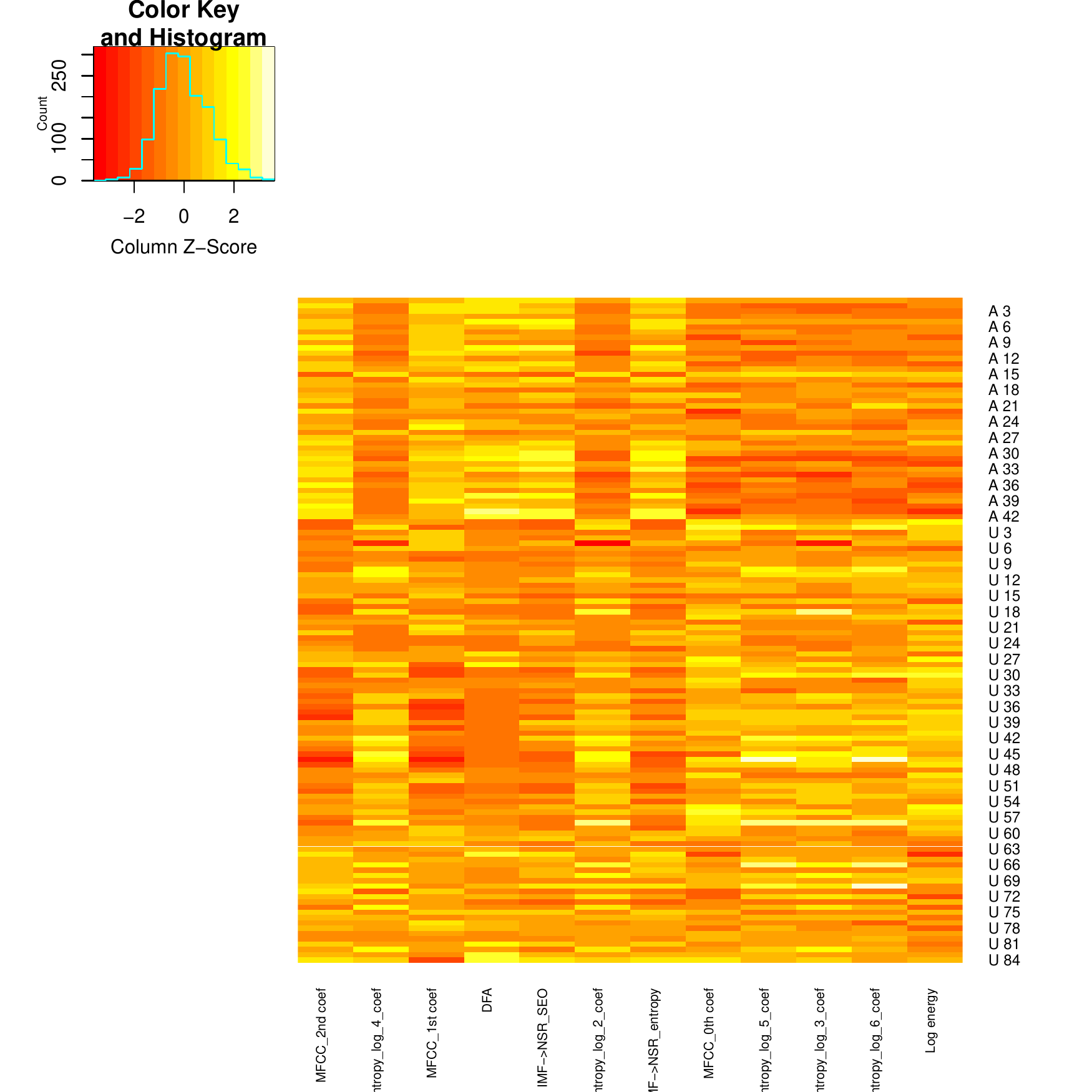}\\
(a) heatmap on all 310 variables  & (b) heatmap on the selected 12 variables \\ \vspace{-0.1cm}
\includegraphics[width=2.35in,height=2.35in]{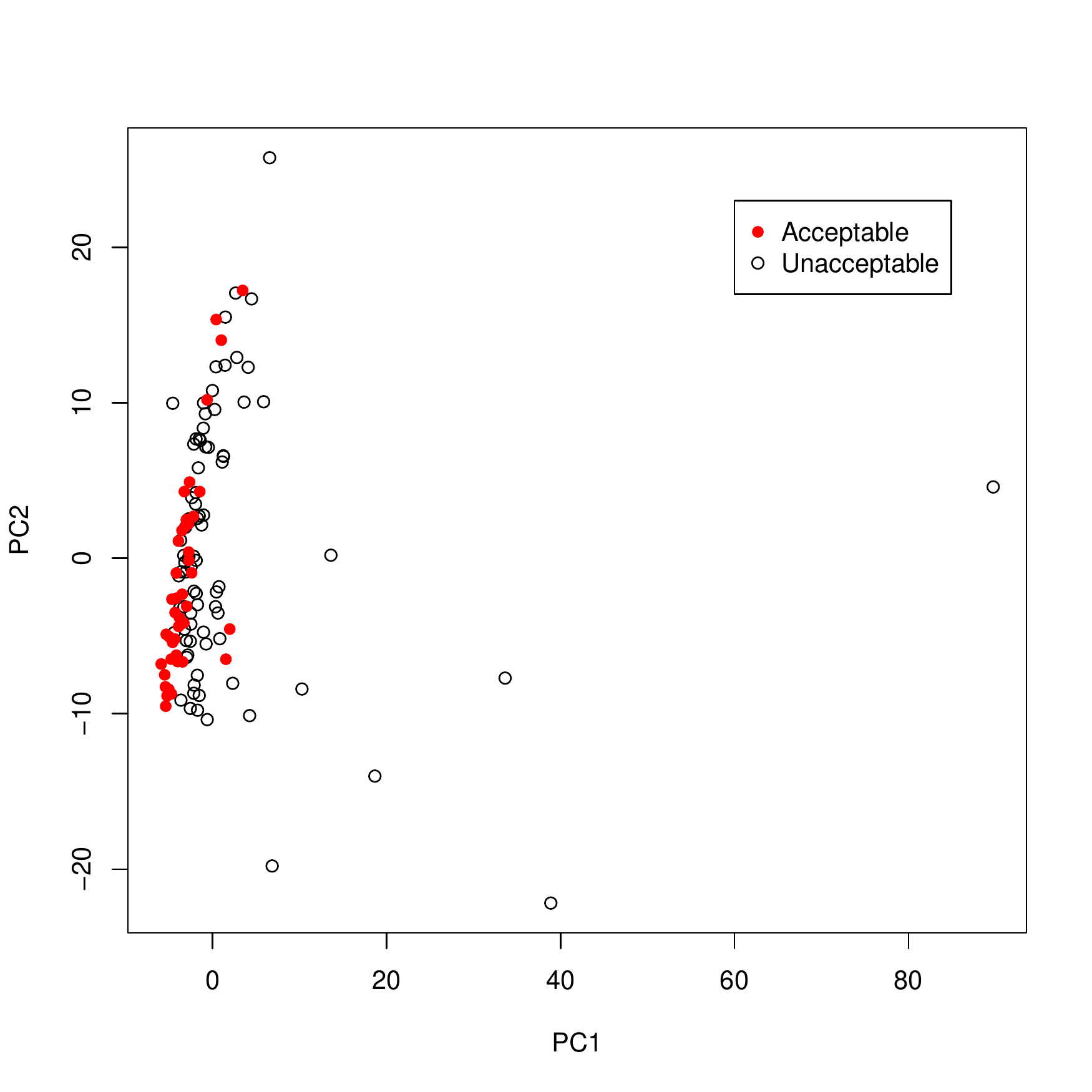} &
\includegraphics[width=2.35in,height=2.35in]{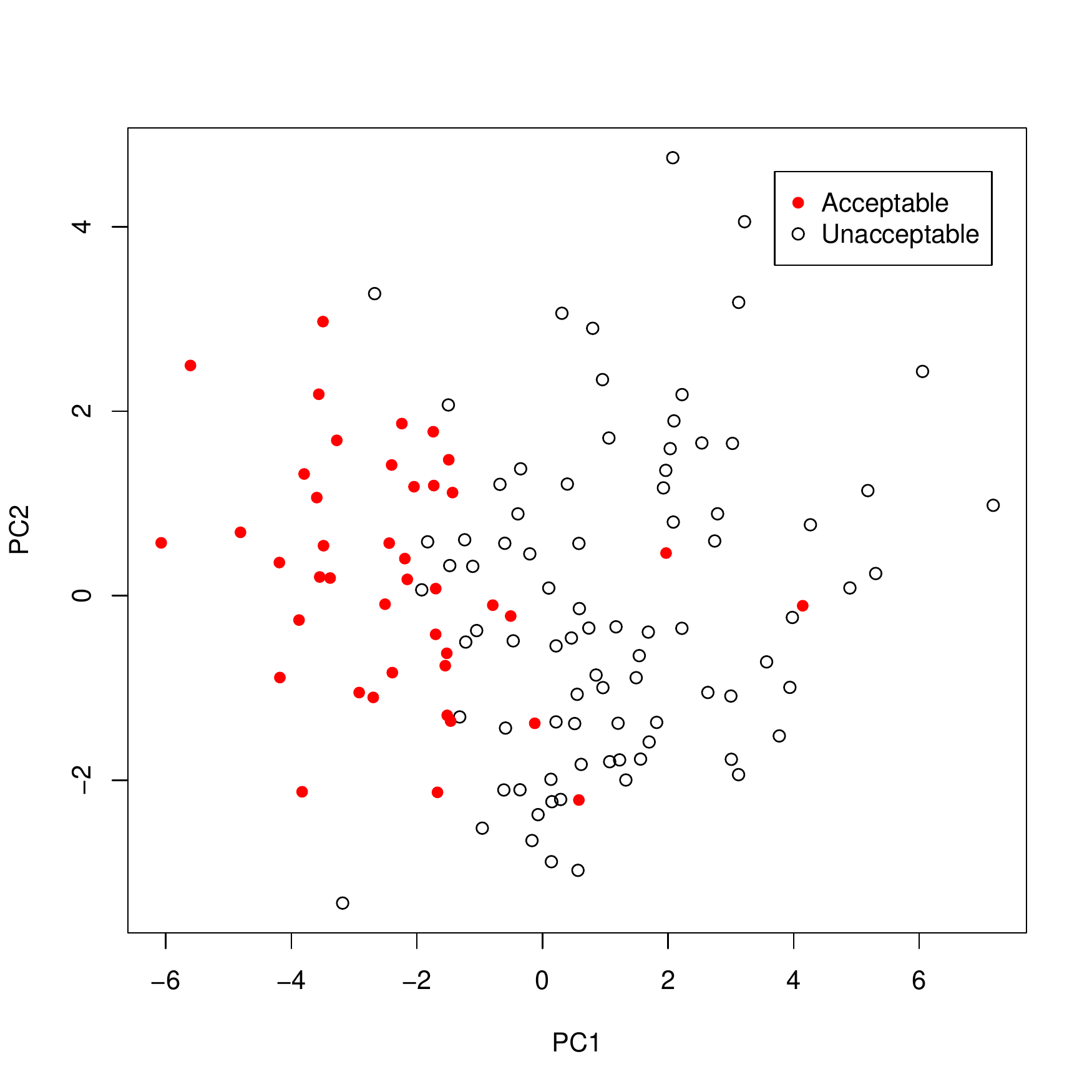}\\ \vspace{-0.1cm}
(c) PCA on all 310 variables  & (d) PCA on the selected 12 variables 
\end{tabular}
\caption{Heatmaps and 2 dimensional PCA projections of Voice rehabilitation data of all 310 variables and  of the selected 12 variables.}
\label{fig:PCA}
\end{figure*}

 A heatmap on all 310 variables is plotted in Figure \ref{fig:PCA}(a) in which the values are centered and scaled by each column variable.  The top third rows are for the acceptable group, while the bottom two thirds for the unacceptable group. It is quite difficult to view differences between two groups. However, the difference shows in the heatmap on the selected 12 variables in Figure \ref{fig:PCA}(b). The selected 12 variables are those with its categorical Gini correlation greater than 0.1. 
 
We also conduct principal component analysis  (PCA) on all variables. The proportions of variance of first two principal components (PC) are 32.29\% and 19.87\%, altogether accounting for 52.16\% of the total variance.  The data are projected on the plane of the first two PC's shown in he left panel of Figure \ref{fig:PCA}(c) in which several patients with unacceptable evaluation are clearly outliers. We also plot data projection on the first two PC's when PCA is conducted on the selected 12 variables in Figure \ref{fig:PCA}(d).  From it, we can see that the unacceptable group tends to have larger values in the first PC.  After a simple feature selection to reduce dimensionality, the separation of two groups is more  evident. In the next, we perform formal tests on equality of distributions of two groups. The test of distributions on all 310 variables and the test of distributions on the 12 selected variables are conducted.

Besides the five methods considered in Section \ref{subsec: test}, five 2-sample test methods are added for comparison. Three methods are proposed in \cite{Li2018} and denoted as Li-loc, Li-scal and Li-both. Sz\'{e}kely's energy test statistic in high dimension is also studied in \cite{Li2018}. It is asymptotically normally distributed,  equivalent to gCov and dCov, but its variance estimation is different and quite complicate in \cite{Li2018} and we include it for comparing its efficiency on variance estimation.   The last considered method denoted as BG is proposed by  Biswas and Ghosh in \cite{Biswas2014}. 
The p-values of those ten methods are reported in Table \ref{tab:voice}. 

\begin{table*}[thb]
\centering
\begin{tabular}{ c |c c c c c } \hline\hline
&gCov &gCov-perm &dCov& dCov-perm & GLP \\ 
 all 310 variables&0.0011& 0.0211& 0.0013 &0.0193& 0.5124\\ 
%& 12 PC's & 0.0000& 0.0000&0.0000&0.0000&0.0000\\
 12 selected variables & 0.0000& 0.0000&0.0000&0.0000&0.0000\\ \hline 
&Li-loc &Li-scal &Li-both &Szekely & BG \\   
 all 310 variables &0.0204& 0.3190 &0.0197& 0.0166 &0.3272\\
%& 12 PC's & 0.0000&0.1295 &0.0000&0.0000&0.2273\\
 12 selected variables & 0.0000&0.0958&0.0000&0.0000&0.0009\\ \hline \hline
\end{tabular}
\caption{p-values of various 2 sample tests for all features and for the select 12 features in LSVT Voice rehabilitation data.}
\label{tab:voice}
\end{table*}

With the feature selection to reduce dimension,  all methods except for Li-scal strongly reject the equality of two distributions. While for the high dimension data,  three methods GLP, Li-scal and BG fail to conclude different distributions in two groups. The gCov and dCov methods provide the most significant evidence on the differences of two groups. 
 
 \subsection{Arcene data} 
 The second data set we apply to is Arcene mass-spectrometric data for 900 patients from cancer group and healthy group. The data set was merged from three resources on ovarian cancer data and prostate cancer data.  The preprocessing steps of limiting the mass range, averaging the technical repeats, removing the baseline, smoothing, rescaling and aligning the spectra were prepared to reduce disparity between data sources. Arcene data have 10000 features including 7000 real features and 3000 random probes. The dimension is much higher than the sample size. The data was formatted for benchmarking variable selection algorithms for the two class classification problem in 2003 NIPS, the top conference on machine mining and computational neuroscience.  The data were partitioned to training, validation, and test sets. For the training and validation sets, each has 44 cancer positives and 56 negatives, while the test set has 310 positives and 390 negatives. Refer to \cite{Guyon04} for details about the data preparation and NIPS challenge results. 
  \begin{table}[thb]
\centering
\begin{tabular}{ c |c c c c c } \hline\hline
&gCov &gCov-perm &dCov& dCov-perm & GLP \\ \hline
p-value&  0.5394& 0.4530& 0.4132& 0.3160& 0.0389 \\
\hline\hline
\end{tabular}
\caption{p-values of testing whether training data, testing data and validation data in ARCENE have a same distribution.}
\label{tab:arcene}
\end{table}

Rather than conducting two sample test, we perform 3 sample testing on distributional equality of  the training data, validation data and test data. That is the assumption and the logic behind the procedure of using training data to build model, using validation data to select model  and using test data to assess model.  P-values of five methods are reported in Table \ref{tab:arcene}. Only GLP rejects the equality with p-value 0.0389, while the other four methods with large p-values support the distribution equality assumption that makes the data mining challenge competition valid.

%%%%%%%%%%%%%%%%%%%%%%%%%%%%%%%%%%%%%%%%%%
%%%%%%%%%%%%%%%%%%%%%%%%%%%%%%%%%%%%%%%%%
\section{Conclusions and future work}\label{sec:conclusion}
The categorical Gini correlation is an alternative to the distance correlation to measure the correlation  between a $p$-variate numeric variable $\bi X$ and a categorical variable $Y$.  But the Gini one has more appealing properties such as nice presentation and better interpretation. When $p$ is fixed, Dang $et$ $al.$ \cite{Dang2021} showed that the sample Gini correlation converges in distribution to a quadratic form of normal distributions under independence of $\bi X$ and $Y$. 
In this paper, we have studied the inference of the categorical Gini correlation in a more realistic setting  where both the sample size and the dimensionality are diverging in an arbitrary fashion.  One of our main results, Theorem \ref{gcov1}, reveals that those complicated quadratic forms of normal random variables admit a normal limit as the dimensionality $p$ diverges to infinity, providing an intriguing example to understand the distinction between classical and high-dimensional theory. 

Based on these asymptotic distributions, a new consistent $K$-sample test has been developed. Both simulation studies and real data illustrations have shown the proposed test performs uniformly better than the distance correlation based test for unbalanced cases. 

The Gini covariance has been generalized to a reproducing kernel Hilbert space (RKHS)  in \cite{Zhang21} as follows.
\begin{align} \label{kernelgcov}
\mbox{gCov}(\bi X,Y; d_\kappa) &=\E  \{ d_\kappa(\bi X_1,\bi X_2)\}-\sum_{k=1}^Kp_k\E \{d_\kappa(\bi X_1^{(k)},\bi X_2^{(k)})\},
\end{align}
where $d_\kappa(\bi x_1, \bi x_2) = \sqrt{\kappa(\bi x_1, \bi x_1)+\kappa(\bi x_2, \bi x_2)-2\kappa(\bi x_1, \bi x_2)}$, the distance in the feature space induced by positive definite kernel $\kappa$. More specifically, a positive definite kernel, $\kappa: \mathbb{R}^p \times \mathbb{R}^p \rightarrow \mathbb{R}$, implicitly defines an embedding map: 
\begin{align*}
\phi: \bi x \in \mathbb{R}^p \mapsto \phi(\bi x) \in {\cal F},
\end{align*}
via an inner product in the feature space ${\cal F}$:
\begin{align*}
\kappa(\bi x_1, \bi x_2) = \langle \phi(\bi x_1), \phi(\bi x_2) \rangle, \;\;\; \bi x_1, \bi x_2 \in \mathbb{R}^p. 
\end{align*} 
Replacing the expectations in (\ref{kernelgcov}) by the corresponding $U$-statistics and replacing $p_k$ by $\hat{p}_k$, we obtain the sample kernelized Gini covariance $\mbox{gCov}_n(\bi X, Y; d_\kappa)$.  With a choice of bounded kernel such as the popular radial basis function kernel (RBF), the moment condition \textbf{C}1 can be dropped. It will be interesting to derive similar results for kernel covariance and correlation.  

As long as pairwise (dis)similarities are available, kernel Gini covariance can be used for complex data type.  It is interesting to adopt kernel Gini covariance and correlation based on neural tangent kernel (NTK) in the study of deep artificial neural networks (ANN).  Continuations of this work could take those directions as well as the following. 
%The categorical Gini correlation is a  new dependence measure and hence has not been widely explored in literature. Also we will continue this project  the following directions.
\begin{itemize}
\item The permutation test based on Gini covariances in high dimension has demonstrated its size and power empirically. A theoretical and rigorous treatment is needed. 
%\item The Gini covariance 
%\begin{align*}
%\mbox{gCov}(\bi X,Y) &= \Delta-\sum_{k=1}^Kp_k\Delta_k=\E \|\bi X_1-\bi X_2\|-\sum_{k=1}^Kp_k\E \|\bi X_1^{(k)}-\bi X_2^{(k)}\|
%\end{align*}
%can be generalized to a reproducing kernel Hilbert space (RKHS)  as%Hilbert-Schmidit 
%\begin{align*}
%\mbox{gCov}(\bi X,Y;d_\kappa) &=\E  \{ d_\kappa(\bi X_1,\bi X_2)\}-\sum_{k=1}^Kp_k\E \{d_\kappa(\bi X_1^{(k)},\bi X_2^{(k)})\},
%\end{align*}
%where $d_\kappa(\bi x_1, \bi x_2) = \sqrt{\kappa(\bi x_1, \bi x_1)+\kappa(\bi x_2, \bi x_2)-2\kappa(\bi x_1, \bi x_2)}$, the distance in the RKHS induced by positive definite kernel $\kappa$. The inference of the generalized Gini covariance/correlation will be explored. 
\item  When $\bi X$ and $Y$ are dependent, the CLT holds for the sample Gini covariance $\mbox{gCov}_n$. 
Under the null that $\bi X$ and $Y$ are independent,  $\mbox{gCov}_n$ is a $U$-statistic representation with first order degeneracy but admits a normal limit in the high dimension. Therefore,  we would expect a non-null CLT for $\mbox{gCov}_n$ when $p \to \infty$.
\item In this study, the number of levels of $Y$ is fixed and finite. However, some applications like Poisson process have infinity levels.  In some applications like discretization procedure, the number of levels might increase as sample size increases.   It is interesting to study estimation of Gini correlation in those cases and explore its asymptotical distribution when $n$, $p$ and $K$ diverge. 
\end{itemize}
%%%%%%%%%%%%%%%%%%%%%%%%%%%%%%%%%%%%%%%%%%%
%%%%%%%%%%%%%%%%%%%%%%%%%%%%%%%%%%%%%%%%%%%
%%%%%%%%%%%%%%%%%%%%%%%%%%%%%%%%%%%%%%%%%%%%
\section{Appendix}
\noindent
Let $\bi X, \bi X_1, \bi X_2$, $\bi X_3$ and $\bi X_4$ be independent random variables from $F$.
We will adopt the following notations through this section. 
\begin{align*}
&\xi(\bi X_1)=\E \Big(d^2(\bi X, \bi X_1)\big|\bi X_1\Big),\\  
&\sigma^2=\E \xi (\bi X_1),\\
%&\E\Big( \xi (\bi X_1) \xi (\bi X_2)\Big)=\sigma^4,\\
&\gamma^4=\E \Big(\xi^2 (\bi X_1)\Big),\\
&\eta (\bi X_1, \bi X_2)=\E \Big(\big(d(\bi X, \bi X_1)\bigm|\bi X_1\big)\big(d(\bi X, \bi X_2)\bigm|\bi X_2\big)\Big),\\
&\tau^4=\E\big(\eta(\bi X_1, \bi X_2)\big)^2,\\
&\omega^4=\E d^4(\bi X_1, \bi X_2).
\end{align*}
It is easy to check that  $\gamma^4>\sigma^4>\tau^4$ and $\omega^4 >\sigma^4$ by  Jensen's inequality. 

\subsection{Lemmas} 
Before we prove the major result in Theorem \ref{gcov1}, let us provide several necessary lemmas and their proofs. The remaining lemmas shall be given in the proof of Theorem \ref{gcov1}. 
The double centered distance $d(\cdot, \cdot)$ in (\ref{d}) has appealing orthogonal properties  in the following Lemmas \ref{prop_d} and \ref{prop_eta}.

\begin{lemma}\label{prop_d}
If $\E\|\bi X\|^4 <\infty$, then
$d(\cdot, \cdot)$ in (\ref{d}) satisfies 
\begin{enumerate}
\item $\E d(\bi X_1, \bi X_2) =0$;
\item $\E \big(d(\bi X_1, \bi X_2)\bigm|\bi X_1\big) = \E\big(d(\bi X_1,\bi X_2)\bigm|\bi X_2\big) =0$;
\item $\E\big( d(\bi X_1, \bi X_2)d(\bi X_1, \bi X_3)\big) =0$;
\item $\E\big(d(\bi X_1, \bi X)d(\bi X_2, \bi X) d(\bi X_3, \bi X)d(\bi X_4, \bi X)\big)=0$;
\item $\E\big(d^2(\bi X_1, \bi X)d(\bi X_2, \bi X)d(\bi X_3, \bi X)\big)=0$;
%\item $\E\big(d^2(\bi X_1, \bi X)d^2(\bi X_2, \bi X)\big)=\E\left\{\E\big(d^2(\bi X_1,\bi X)\bigm|\bi X\big)E\big(d^2(\bi X_2, \bi X)\bigm|\bi X\big)\right\}=\sigma^4$;
\item $\E \big(d^3(\bi X_1, \bi X)d(\bi X_2, \bi X)\big)=0$.
\item $\E\big(d^2(\bi X_1, \bi X_2)\big)=\sigma^2$;
\item $\E\big(d^2(\bi X_1, \bi X_2)d^2(\bi X_1, \bi X_3)\big)=\gamma^4$.
%= \E\bigg( \E\big( d(\bi X_1, \bi X_2) d(\bi X_1, \bi X_3)\bigm|\bi X_1\big)\bigg) =\E( \E[d(\bi X_1, \bi X_2)\bigm|\bi X_1]\E [d(\bi X_1, \bi X_3)\bigm|\bi X_1]) = 0.$
\end{enumerate}
\end{lemma}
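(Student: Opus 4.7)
The plan is to exploit the fact that, by construction, $d(\bi X_1, \bi X_2)$ is the second-order double-centering Hoeffding projection of the kernel $\|\bi X_1 - \bi X_2\|$. Every one of the eight identities reduces to the zero-conditional-mean property (2), combined with the (conditional) independence of distinct sample points, applied via the tower rule. The assumption $\E\|\bi X\|^4 < \infty$ dominates products of up to four copies of $d$ by sums of the form $\|\bi X_i\|^4 + \|\bi X_j\|^4$ (after the triangle inequality), which legitimizes every interchange of expectation and conditional expectation; no further integrability concerns arise.

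I would first dispatch (1) and (2) by direct substitution. Let $h(\bi x) = \E\|\bi x - \bi X_2\|$ and $c = \E\|\bi X_1 - \bi X_2\|$. Because $\bi X_1$ and $\bi X_2$ are i.i.d., $\E(\|\bi X_1 - \bi X_2\| \mid \bi X_1) = h(\bi X_1)$ and $\E h(\bi X_1) = c$. Plugging these into the definition of $d$ and taking the conditional expectation given $\bi X_1$ produces $h(\bi X_1) - h(\bi X_1) - c + c = 0$; the condition on $\bi X_2$ is symmetric, and (1) follows from (2) by a further unconditioning.

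Parts (3)--(6) all follow a common template: identify the one shared argument, condition on it, and exploit that the remaining sample points are conditionally independent. For (3), conditioning on $\bi X_1$ factors the integrand as $\E[d(\bi X_1, \bi X_2)\mid \bi X_1]\,\E[d(\bi X_1, \bi X_3)\mid \bi X_1]$, both zero by (2). In (4)--(6) the shared argument is $\bi X$; conditionally on $\bi X$, the factors $d(\bi X_i, \bi X)$ are independent, and in each product at least one of them appears linearly and has conditional mean zero given $\bi X$ (by (2) together with the symmetry of $d$ in its two arguments), which annihilates the whole product. Finally, (7) is essentially a relabeling: $\xi(\bi X_1) = \E(d^2(\bi X, \bi X_1) \mid \bi X_1)$ gives $\sigma^2 = \E d^2(\bi X_1, \bi X_2)$ upon renaming the generic $\bi X$. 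For (8), conditioning on $\bi X_1$ again separates the two conditionally independent factors, producing $\xi(\bi X_1)^2$ inside the outer expectation and hence $\E\xi^2(\bi X_1) = \gamma^4$.

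The main obstacle, and it is a mild one, is bookkeeping: one must correctly identify the ``common'' argument in each of (3)--(6) and verify that the presence of a single zero conditional mean is enough to kill the remaining product. Beyond that, everything falls out cleanly from the projection structure already built into $d$, so no delicate estimates or new analytic tools are required.
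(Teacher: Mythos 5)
Your proposal is correct and follows essentially the same route as the paper: verify the vanishing conditional means in (2) directly from the definition of $d$, then handle (3)--(8) by conditioning on the shared argument and exploiting the conditional independence of the remaining sample points (the paper only writes out (3) explicitly and notes the rest are similar). Your additional remark on why $\E\|\bi X\|^4 < \infty$ suffices for the fourth-order products is a sensible, if routine, supplement to what the paper leaves implicit.
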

\begin{proof}
It is straightforward to obtain that  $\E d(\bi X_1, \bi X_2)=0$ and 
$$\E \big(d(\bi X_1, \bi X_2)\bigm|\bi X_1\big) = \E\big(d(\bi X_1,\bi X_2)\bigm|\bi X_2\big) =0.$$
%\begin{align*}
%\E d(\bi X_1, \bi X_2)=0 \ \text{ and } \  \E \big(d(\bi X_1, \bi X_2)\bigm|\bi X_1\big) = \E\big(d(\bi X_1,\bi X_2)\bigm|\bi X_2\big) =0.
%\end{align*}
By the double expectation argument, we have 
\begin{align*}
 \E\big(d(\bi X_1, \bi X_2)d(\bi X_1, \bi X_3)\big)&= \E\left\{ \E\big( d(\bi X_1, \bi X_2) d(\bi X_1, \bi X_3)\bigm|\bi X_1\big)\right \}\\
& =\E\left\{ \E\big(d(\bi X_1, \bi X_2)\bigm|\bi X_1\big)\E \big(d(\bi X_1, \bi X_3)\bigm|\bi X_1\big) \right\}\\
&= 0.
\end{align*}
The other properties can be proved similarly. 
\end{proof}

\begin{lemma}\label{prop_eta}
If $\E\|\bi X\|^4 <\infty$, we have
 \begin{enumerate}
\item $\E \eta(\bi X_1, \bi X_2)=0,$
\item $\E\big(\eta(\bi X_1, \bi X_2)\eta(\bi X_1, \bi X_3)\big)=0,$
%\item $\E[\eta(X_1, X_2)]^2=\E\left \{\E[d(X, X_1)d(X, X_2)|X_1, X_2] \E[d(X', X_1)d(X', X_2)|X_1, X_2]  \right \}$,
\item$\E[d(\bi X_1, \bi X_3)d(\bi X_2, \bi X_3)d(\bi X_1, \bi X_4)d(\bi X_2, \bi X_4)]=\tau^4,$
\item $\E\big(\xi(\bi X_1)\eta(\bi X_1, \bi X_2)\big)^2=0.$
%=\E\left \{\E[d^2(X, X_1)|X_1] \E[d(X', X_1)|X_1] \E[d(X'', X_2)|X_2]  \right \}\\
%&=\E[d^2(X, X_1)d(X', X_1)d(X'', X_2)]\\
%&=\E\left\{ \E[d^2(X, X_1)d(X', X_1)|X, X'] \E[d(X_2, X'')|X'']\right \}\\
%&=0.
\end{enumerate}
\end{lemma}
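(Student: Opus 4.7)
My plan is to handle all four items by a single mechanism: replace every occurrence of $\eta(\bi X_i, \bi X_j)$ by its definition $\E[d(\bi X,\bi X_i)\,d(\bi X,\bi X_j)\mid \bi X_i,\bi X_j]$ with $\bi X$ a fresh i.i.d.\ copy of the population, then use the tower property to convert each identity into the expectation of a monomial in $d$-values evaluated at several independent copies of $\bi X$. In this form, the zero-mean identity $\E[d(\bi X,\bi Y)\mid \bi X]=0$ from Lemma~\ref{prop_d}(2), together with the symmetry of $d$ in its two arguments and the independence of the copies, collapses each expression to the claimed value. Integrability of every monomial involved (so that each interchange of expectations is legitimate) follows from $\E\|\bi X\|^4<\infty$ and the bound $|d(\bi X_1,\bi X_2)|\le \|\bi X_1-\bi X_2\| + \E[\|\bi X_1-\bi X_2\|\mid \bi X_1] + \E[\|\bi X_1-\bi X_2\|\mid \bi X_2] + \E\|\bi X_1-\bi X_2\|$.

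For part (1), iterated expectation gives $\E\,\eta(\bi X_1,\bi X_2)=\E[d(\bi X,\bi X_1)\,d(\bi X,\bi X_2)]$; conditioning on $\bi X$ and using independence of $\bi X_1,\bi X_2$ factors this as $\E\bigl[\E[d(\bi X,\bi X_1)\mid \bi X]\cdot \E[d(\bi X,\bi X_2)\mid \bi X]\bigr]=0$. For part (2), introduce two independent fresh copies $\bi X,\bi X'$ so that
\begin{align*}
\E[\eta(\bi X_1,\bi X_2)\,\eta(\bi X_1,\bi X_3)] = \E\bigl[d(\bi X,\bi X_1)\,d(\bi X,\bi X_2)\,d(\bi X',\bi X_1)\,d(\bi X',\bi X_3)\bigr].
\end{align*}
Conditioning on $\{\bi X,\bi X',\bi X_1,\bi X_3\}$ and integrating out the single factor containing $\bi X_2$ invokes $\E[d(\bi X,\bi X_2)\mid \bi X]=0$, killing the whole expression.

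For part (3), condition on the pair $(\bi X_1,\bi X_2)$: since $\bi X_3$ and $\bi X_4$ are conditionally independent given $(\bi X_1,\bi X_2)$, the conditional expectation factors into $\E[d(\bi X_1,\bi X_3)\,d(\bi X_2,\bi X_3)\mid \bi X_1,\bi X_2]\cdot \E[d(\bi X_1,\bi X_4)\,d(\bi X_2,\bi X_4)\mid \bi X_1,\bi X_2]=\eta(\bi X_1,\bi X_2)^2$, and the outer expectation is $\tau^4$ by definition. For part (4), which I read as the vanishing cross-term $(\E[\xi(\bi X_1)\,\eta(\bi X_1,\bi X_2)])^2=0$, the same expansion produces $\E[d^2(\bi X,\bi X_1)\,d(\bi X',\bi X_1)\,d(\bi X',\bi X_2)]$; conditioning on $\{\bi X,\bi X',\bi X_1\}$ and integrating out $\bi X_2$ applies $\E[d(\bi X',\bi X_2)\mid \bi X']=0$. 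The only genuine obstacle is the bookkeeping: within each $\eta$ one must keep the fresh dummy copies notationally distinct from the outer $\bi X_i$'s so that Fubini is applied in the correct order, but no estimate beyond the zero-mean identities already established in Lemma~\ref{prop_d} is required.
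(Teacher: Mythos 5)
Your proof is correct and follows essentially the same route as the paper: expand each $\eta$ via its definition using fresh i.i.d.\ copies, apply the tower property, and annihilate terms with the conditional zero-mean property of $d$ from Lemma~\ref{prop_d}. Your reading of item~4 as the vanishing of the cross-moment $\E\big(\xi(\bi X_1)\eta(\bi X_1,\bi X_2)\big)$ matches the paper's intent, and your derivation of it is in fact cleaner than the paper's displayed factorization.
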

\begin{proof} $\E \eta(\bi X_1, \bi X_2)=0$ follows directly from the property 3 in Lemma \ref{prop_d}. 
Using the double expectation argument and properties in Lemma \ref{prop_d} , we have 
 \begin{align*}
&\E\big(\eta(\bi X_1, \bi X_2)\eta(\bi X_1, \bi X_3)\big)\\
&=\E\left \{\E\big(d(\bi X, \bi X_1)d(\bi X, \bi X_2)\bigm|\bi X_1, \bi X_2\big) \E\big(d(\bi X', \bi X_1)d(\bi X', \bi X_3)\bigm|\bi X_1, \bi X_3\big)  \right \}\\
&=\E\big(d(\bi X, \bi X_1)d(\bi X, \bi X_2)d(\bi X', \bi X_1)d(\bi X', \bi X_3)\big)\\
&=\E\left \{\E\big(d(\bi X, \bi X_1)d(\bi X, \bi X_2)d(\bi X', \bi X_1)d(\bi X', \bi X_3)\bigm|\bi X, \bi X'\big)  \right \}\\
&=\E\left \{\E\big(d(\bi X, \bi X_1)d(\bi X', \bi X_1)\bigm|\bi X, \bi X'\big) \E\big(d(\bi X, \bi X_2)\bigm|\bi X\big) \E\big(d(\bi X', \bi X_3)\bigm|\bi X'\big)  \right \} =0,\\
%&=0,\\
&\E[\eta(\bi X_1, \bi X_2)]^2\\
&=\E\left \{\E \big(d(\bi X, \bi X_1)d(\bi X, \bi X_2)|\bi X_1, \bi X_2\big) \E\big(d(\bi X', \bi X_1)d(\bi X', \bi X_2)|\bi X_1, \bi X_2\big)  \right \}\\
&=\E\big(d(\bi X_1, \bi X_3)d(\bi X_2, \bi X_3)d(\bi X_1, \bi X_4)d(\bi X_2, \bi X_4)\big)= \tau^4,\\
%&= \tau^4,\\
%\end{align*}
%\begin{align*}
&\E\big(\xi(\bi X_1)\eta(\bi X_1, \bi X_2)\big)^2\\
&=\E\left \{\E\big(d^2(\bi X, \bi X_1)\bigm|\bi X_1\big) \E\big(d(\bi X', \bi X_1)\bigm|\bi X_1\big) \E\big(d(\bi X'', \bi X_2)\bigm|\bi X_2\big)  \right \}\\
&=\E\big(d^2(\bi X, \bi X_1)d(\bi X', \bi X_1)d(\bi X'', \bi X_2)\big)\\
&=\E\left\{ \E\big(d^2(\bi X, \bi X_1)d(\bi X', \bi X_1)\bigm|\bi X, \bi X'\big) \E\big(d(\bi X_2, \bi X'')\bigm|\bi X''\big)\right \} =0.
%&=0.
\end{align*}
This completes the proof of Lemma \ref{prop_eta}.
\end{proof}

 \begin{lemma} \label{ncond}
 Under conditions \textbf{C2}, 
 \begin{align*}
 \dfrac{\gamma^4}{n\sigma^4} \to 0.
 \end{align*}
 \end{lemma}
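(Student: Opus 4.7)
\noindent\textbf{Proof proposal for Lemma \ref{ncond}.}

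The plan is to bound $\gamma^4$ by $\omega^4$ using Jensen's inequality and then invoke condition \textbf{C2} directly. Recall that $\gamma^4 = \E\xi^2(\bi X_1)$ where $\xi(\bi X_1) = \E\bigl(d^2(\bi X,\bi X_1)\bigm|\bi X_1\bigr)$, and that condition \textbf{C2} is precisely the statement $\omega^4/(n\sigma^4)\to 0$ with $\omega^4=\E d^4(\bi X_1,\bi X_2)$ and $\sigma^4=(\E d^2(\bi X_1,\bi X_2))^2$.

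First, I would apply conditional Jensen's inequality to the convex function $t\mapsto t^2$ to obtain
\[
\xi^2(\bi X_1) \;=\; \bigl[\E\bigl(d^2(\bi X,\bi X_1)\bigm|\bi X_1\bigr)\bigr]^2 \;\le\; \E\bigl(d^4(\bi X,\bi X_1)\bigm|\bi X_1\bigr).
\]
Taking unconditional expectations and using the tower property then gives $\gamma^4 \le \E d^4(\bi X,\bi X_1) = \omega^4$.

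Second, I would divide both sides by $n\sigma^4$ to conclude
\[
0 \;\le\; \frac{\gamma^4}{n\sigma^4} \;\le\; \frac{\omega^4}{n\sigma^4} \;\longrightarrow\; 0
\]
by condition \textbf{C2}, and the lemma follows by the squeeze theorem. There is no real obstacle here; the only substantive point is recognizing that condition \textbf{C2} controls the fourth moment of $d(\bi X_1,\bi X_2)$ and that Jensen's inequality reduces the second moment of the conditional expectation $\xi$ to this same fourth moment.
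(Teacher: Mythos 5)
Your proposal is correct and takes essentially the same approach as the paper: both reduce the lemma to the bound $\gamma^4 \le \omega^4 = \E d^4(\bi X_1,\bi X_2)$ and then invoke condition \textbf{C}2 directly. The paper obtains this bound via the Cauchy--Schwarz inequality applied to $\gamma^4=\E\bigl(d^2(\bi X_1,\bi X)d^2(\bi X_2,\bi X)\bigr)$, while you use conditional Jensen on $\xi^2(\bi X_1)$; these are interchangeable routes to the identical inequality.
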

 \begin{proof} By the Cauchy-Schwarz  inequality, it is easy to obtain that 
 \begin{align*}
 \gamma^4&=\E\big(d^2(\bi X_1, \bi X)d^2(\bi X_2, \bi X)\big)\\
 & \leq  \big(\E d^4(\bi X_1, \bi X)\big)^{1/2} \big(\E d^4(\bi X_2, \bi X)\big)^{1/2}\\
 &=\E d^4(\bi X_1, \bi X_2).
 \end{align*}
 By condition \textbf{C}2, we have $\dfrac{\gamma^4}{n\sigma^4} \to 0.$
 \end{proof}
\noindent
\subsection{Proof of Theorem \ref{gcov1}}
\noindent
Under  independence of $\bi X$ and $Y$, by Lemma \ref{prop_d}, we have  
\begin{align}
%\sigma^2_0=Var\big(\widehat{gCov}(\bi X, Y)\big) \\
\sigma^2_0&=Var\bigg({n \choose 2}^{-1}\sum_{1 \leq i<j \leq n}d(\bi X_i, \bi X_j)\bigg)+\sum_{k=1}^K \hat{p}^2_k Var\bigg({n_k \choose 2}^{-1}\sum_{1 \leq i<j \leq n_k}d(\bi X^{(k)}_i,\bi X^{(k)}_j)\bigg) \nonumber\\
&-2{n \choose 2}^{-1}\sum_{k=1}^K \hat{p}_k {n_k \choose 2}^{-1}Cov\bigg(\sum_{1 \leq i <j \leq n}d(\bi X_i, \bi X_j), \sum_{1 \leq i<j \leq n_k}d(\bi X^{(k)}_i,\bi X^{(k)}_j)\bigg) \nonumber \\
&={n \choose 2}^{-1}Var\bigg(d(\bi X_1, \bi X_2)\bigg)+\sum_{k=1}^K \hat{p}^2_k {n_k \choose 2}^{-1} Var\bigg(d(\bi X^{(k)}_1,\bi X^{(k)}_2)\bigg) \nonumber \\
&-2{n \choose 2}^{-1}\sum_{k=1}^K \hat{p}_k Var\bigg(d(\bi X^{(k)}_1,\bi X^{(k)}_2)\bigg) \nonumber\\
&=\bigg(\sum_{k=1}^K \hat{p}^2_k {n_k \choose 2}^{-1}-{n \choose 2}^{-1}\bigg) \E d^2(\bi X_1, \bi X_2) \label{eqn:vargcov} \\
&=\big(\dfrac{2K-2}{n^2}+o(n^{-2})\big)\E d^2(\bi X_1, \bi X_2), \label{eqn:vargcov1}
\end{align}
where 
$\E d^2(\bi X_1, \bi X_2) = V^2(\bi X)$ is the squared distance variance of $\bi X$ in \cite{Szekely2007}.

For a short presentation, we denote $gCov_n(\bi X, Y)$ as $G_n$, which is 
\begin{align*}
G_n
&:={n \choose 2}^{-1}\sum_{1 \leq i<j \leq n}
d(\bi X_i, \bi X_j)-\sum_{k=1}^K \hat{p}_k{n_k \choose 2}^{-1}\sum_{1 \leq i<j \leq n_k}d(\bi X^{(k)}_i,\bi X^{(k)}_j).
\end{align*}
In order to show the asymptotic normality of $G_{n}$, we construct a martingale sequence as follows. Assume that $\bi X_i$'s have been sorted by $Y_i$'s,  that is, $\bi X_i=\bi X^{(1)}_i$, for  $i=1, 2, ..., n_1$;  $\bi X_{n_1+i}=\bi X^{(2)}_i$, for $i=1, ..., n_2$;  ...;  $\bi X_{n_1+...+n_{k-1}+i}=\bi X^{(k)}_i$, for $i=1, ..., n_k$.
%Let $\bi Z_i$'s be sorted $\bi X_i$'s in $Y$ labels. That is, $\bi Z_i=\bi X^{(1)}_i, i=1, 2, ..., n_1$, $\bi Z_{n_1+1+i}=\bi X^{(2)}_i, i=1, ..., n_2$, ..., $\bi Z_{n_1+...+n_{k-1}+i}=\bi X^{(k)}_i, i=1, ..., n_k$.
 Let $\mathscr{F}_0=\{\emptyset, \Omega\}$,  $\mathscr{F}_l=\sigma\{\bi X_1, ..., \bi X_l\}$ with $l=1,2,...,n$. $\E_l$ denotes the conditional expectation given $\mathscr{F}_l$. Define 
\begin{align*}
M_{n, l}=(\E_l-\E_{l-1})G_{n}.
\end{align*}
$\{M_{n, l}, 1 \leq l \leq n\}$ is a martingale difference sequence with respect to the nested $\sigma$-fields $\{\mathscr{F}_l, 1 \leq l\leq n\}$. Also under the independence, 
\begin{align*}
\sum^{n}_{l=1}M_{n, l}=(\E_{n}-\E_0)G_{n}=G_{n}-\E G_{n} = G_n. 
\end{align*}
We need to establish the asymptotic normality of $\sum_{l=1}^n M_{n,l}$. Without loss of generality, we will prove the case for $K=3$.

We first work out the representations of $M_{n,l}$ by using the properties in Lemmas \ref{prop_d} and \ref{prop_eta}.  Depending on $l$, $M_{n,l}$ have three forms. 

\noindent{\underline{\bf{Case 1}}}, for $1\leq l \leq n_1$, $\mathscr{F}_l=\sigma\{\bi X^{(1)}_1, ..., \bi X^{(1)}_l\}$.   We have 
\begin{align*}
&\E(G_{n}| \mathscr{F}_l)\\
&={n \choose 2}^{-1}\E\bigg( \sum_{1 \leq i <j \leq n}d(\bi X_i, \bi X_j)|\mathscr{F}_{l}\bigg)-{n_1 \choose 2}^{-1}\hat{p}_1 \E\bigg( \sum_{1 \leq i <j \leq n_1}d(\bi X^{(1)}_i, \bi X^{(1)}_j)|\mathscr{F}_{l}\bigg)\\
&={n \choose 2}^{-1}\sum_{1 \leq i <j \leq l}d(\bi X_i, \bi X_j)-\hat{p}_1 {n_1 \choose 2}^{-1}\sum_{1 \leq i <j \leq l}d(\bi X^{(1)}_i, \bi X^{(1)}_j)\\
&=-\dfrac{2(n-n_1)}{n(n-1)(n_1-1)}\sum_{1 \leq i <j \leq l}d(\bi X^{(1)}_i, \bi X^{(1)}_j),\\
&\E(G_{n}| \mathscr{F}_{l-1})=
-\dfrac{2(n-n_1)}{n(n-1)(n_1-1)}\sum_{1 \leq i <j \leq l-1}d(\bi X^{(1)}_i, \bi X^{(1)}_j).
\end{align*}
Thus,
\begin{align*}
M_{n, l}=(\E_{l}-\E_{l-1})G_{n}=-\dfrac{2(n-n_1)}{n(n-1)(n_1-1)}\sum^{l-1}_{j=1}d(\bi X_l, \bi X^{(1)}_j).
\end{align*}
\noindent{\underline{\bf{Case 2}}}, for $n_1<l \leq n_1+n_2$, $\mathscr{F}_l=\sigma\{\bi X^{(1)}_1, ...,\bi X^{(1)}_{n_1}, \bi X^{(2)}_1..., \bi X^{(2)}_{l-n_1}\}$. We have 
\begin{align*}
&\E(G_{n}| \mathscr{F}_l)\\
&={n \choose 2}^{-1}\E\bigg( \sum_{1 \leq i <j \leq n}d(\bi X_i, \bi X_j)|\mathscr{F}_{l}\bigg)-{n_1 \choose 2}^{-1}\hat{p}_1 \E\bigg( \sum_{1 \leq i <j \leq n_1}d(\bi X^{(1)}_i, \bi X^{(1)}_j)|\mathscr{F}_{l}\bigg)\\
&-{n_2 \choose 2}^{-1}\hat{p}_2 \E\bigg( \sum_{1 \leq i <j \leq n_2}d(\bi X^{(2)}_i, \bi X^{(2)}_j)|\mathscr{F}_{l}\bigg)\\
&={n \choose 2}^{-1}\left\{\sum_{1 \leq i<j \leq n_1}d(\bi X^{(1)}_i, \bi X^{(1)}_j)+\sum_{1\leq i<j \leq l-n_1}d(\bi X^{(2)}_i, \bi X^{(2)}_j)+\sum_{j=1}^{l-n_1}\sum_{i=1}^{n_1}d(\bi X^{(1)}_i, \bi X^{(2)}_j) \right\}\\
&-\hat{p}_1 {n_1 \choose 2}^{-1}\sum_{1 \leq i <j \leq n_1}d(\bi X^{(1)}_i, \bi X^{(1)}_j)-\hat{p}_2{n_2 \choose 2}^{-1} \sum_{1 \leq i <j \leq l-n_1}d(\bi X^{(2)}_i, \bi X^{(2)}_j)\\
&=\bigg({n \choose 2}^{-1}-\hat{p}_1 {n_1 \choose 2}^{-1}\bigg)\sum_{1 \leq i<j \leq n_1}d(\bi X^{(1)}_i, \bi X^{(1)}_j)\\
&+\bigg({n \choose 2}^{-1}-\hat{p}_2 {n_2 \choose 2}^{-1}\bigg)\sum_{1 \leq i<j \leq l-n_1}d(\bi X^{(2)}_i, \bi X^{(2)}_j)+{n \choose 2}^{-1}\sum_{j=1}^{l-n_1}\sum_{i=1}^{n_1}d(\bi X^{(1)}_i, \bi X^{(2)}_j) \\
&=-\dfrac{2(n-n_1)}{n(n-1)(n_1-1)}\sum_{1 \leq i <j \leq n_1}d(\bi X^{(1)}_i, \bi X^{(1)}_j)-\dfrac{2(n-n_2)}{n(n-1)(n_2-1)}\sum_{1 \leq i <j \leq l-n_1}d(\bi X^{(2)}_i, \bi X^{(2)}_j)\\
&+{n \choose 2}^{-1}\sum_{j=1}^{l-n_1}\sum_{i=1}^{n_1}d(\bi X^{(1)}_i, \bi X^{(2)}_j). 
\end{align*}
Therefore,
\begin{align*}
M_{n, l}= -\dfrac{2(n-n_2)}{n(n-1)(n_2-1)}\sum^{l-n_1-1}_{j=1}d(\bi X_l, \bi X^{(2)}_j)+{n \choose 2}^{-1}\sum_{i=1}^{n_1}d(\bi X_l, \bi X^{(1)}_i).
\end{align*}
\noindent{\underline{\bf{Case 3}}}, for  $n_1+n_2<l \leq n$, $\mathscr{F}_l=\sigma\{\bi X^{(1)}_1, ...,\bi X^{(1)}_{n_1}, \bi X^{(2)}_1...,\bi X^{(2)}_{n_2}, \bi X^{(3)}_1,..., \bi X^{(3)}_{l-n_1-n_2}\}$. We have 
\begin{align*}
&\E(G_{n}| \mathscr{F}_l)\\
&={n \choose 2}^{-1}\E\bigg( \sum_{1 \leq i <j \leq n}d(\bi X_i, \bi X_j)|\mathscr{F}_{l}\bigg)-{n_1 \choose 2}^{-1}\hat{p}_1 \E\bigg( \sum_{1 \leq i <j \leq n_1}d(\bi X^{(1)}_i, \bi X^{(1)}_j)|\mathscr{F}_{l}\bigg)\\
&-{n_2 \choose 2}^{-1}\hat{p}_2 \E\bigg( \sum_{1 \leq i <j \leq n_2}d(\bi X^{(2)}_i, \bi X^{(2)}_j)|\mathscr{F}_{l}\bigg)-{n_3 \choose 2}^{-1}\hat{p}_3 \E\bigg( \sum_{1 \leq i <j \leq n_3}d(\bi X^{(3)}_i, \bi X^{(3)}_j)|\mathscr{F}_{l}\bigg)\\
%\end{align*}
%\begin{align*}
&={n \choose 2}^{-1}\left\{\sum_{1 \leq i<j \leq n_1}d(\bi X^{(1)}_i, \bi X^{(1)}_j)+\sum_{1\leq i<j \leq n_2}d(\bi X^{(2)}_i, \bi X^{(2)}_j)+\sum_{1\leq i<j \leq l-n_1-n_2}d(\bi X^{(3)}_i, \bi X^{(3)}_j)\right. \\
&\left. +\sum_{i=1}^{n_1}\sum_{j=1}^{n_2}d(\bi X^{(1)}_i, \bi X^{(2)}_j) +\sum_{i=1}^{n_1}\sum_{j=1}^{l-n_1-n_2}d(\bi X^{(1)}_i, \bi X^{(3)}_j)+\sum_{i=1}^{n_2}\sum_{j=1}^{l-n_1-n_2}d(\bi X^{(2)}_i, \bi X^{(3)}_j)\right\}\\
&-\hat{p}_1 {n_1 \choose 2}^{-1}\sum_{1 \leq i <j \leq n_1}d(\bi X^{(1)}_i, \bi X^{(1)}_j)-\hat{p}_2{n_2 \choose 2}^{-1} \sum_{1 \leq i <j \leq n_2}d(\bi X^{(2)}_i, \bi X^{(2)}_j)\\
&-\hat{p}_3{n_3 \choose 2}^{-1} \sum_{1 \leq i <j \leq l-n_1-n_2}d(\bi X^{(3)}_i, \bi X^{(3)}_j)\\
&=\bigg({n \choose 2}^{-1}-\hat{p}_1 {n_1 \choose 2}^{-1}\bigg)\sum_{1 \leq i<j \leq n_1}d(\bi X^{(1)}_i, \bi X^{(1)}_j)
+\bigg({n \choose 2}^{-1}-\hat{p}_2 {n_2 \choose 2}^{-1}\bigg)\sum_{1 \leq i<j \leq n_2}d(\bi X^{(2)}_i, \bi X^{(2)}_j)\\
&+\bigg({n \choose 2}^{-1}-\hat{p}_3 {n_3 \choose 2}^{-1}\bigg)\sum_{1 \leq i<j \leq l-n_1-n_2}d(\bi X^{(3)}_i, \bi X^{(3)}_j)+{n \choose 2}^{-1}\sum_{i=1}^{n_1}\sum_{j=1}^{n_2}d(\bi X^{(1)}_i, \bi X^{(2)}_j) \\
&+{n \choose 2}^{-1}\sum_{i=1}^{n_1}\sum_{j=1}^{l-n_1-n_2}d(\bi X^{(1)}_i, \bi X^{(3)}_j) 
+{n \choose 2}^{-1}\sum_{i=1}^{n_2}\sum_{j=1}^{l-n_1-n_2}d(\bi X^{(2)}_i, \bi X^{(3)}_j) \\
&=-\dfrac{2(n-n_1)}{n(n-1)(n_1-1)}\sum_{1 \leq i <j \leq n_1}d(\bi X^{(1)}_i, \bi X^{(1)}_j)-\dfrac{2(n-n_2)}{n(n-1)(n_2-1)}\sum_{1 \leq i <j \leq n_2}d(\bi X^{(2)}_i, \bi X^{(2)}_j)\\
&+\dfrac{2(n-n_3)}{n(n-1)(n_3-1)}\sum_{1 \leq i <j \leq l-n_1-n_2}d(\bi X^{(3)}_i, \bi X^{(3)}_j)
+{n \choose 2}^{-1}\sum_{i=1}^{n_1}\sum_{j=1}^{n_2}d(\bi X^{(1)}_i, \bi X^{(2)}_j) \\
&+{n \choose 2}^{-1}\sum_{i=1}^{n_1}\sum_{j=1}^{l-n_1-n_2}d(\bi X^{(1)}_i, \bi X^{(3)}_j) 
+{n \choose 2}^{-1}\sum_{i=1}^{n_2}\sum_{j=1}^{l-n_1-n_2}d(\bi X^{(2)}_i, \bi X^{(3)}_j). 
\end{align*}
Thus,
\begin{align*}
M_{n, l}&=- \dfrac{2(n-n_3)}{n(n-1)(n_3-1)}\sum^{l-n_1-n_2-1}_{j=1}d(\bi X_l, \bi X^{(3)}_j)+{n \choose 2}^{-1}\sum_{i=1}^{n_1}d(\bi X_l, \bi X^{(1)}_i)\\
&+{n \choose 2}^{-1}\sum_{i=1}^{n_2}d(\bi X_l, \bi X^{(2)}_i).
\end{align*}

In order to apply martingale central limit theorem to the constructed martingale sequence, $M_{n, l}, l=1,...,n$, we need the following Lemma \ref{mct_1} and Lemma \ref{mct_2}.  
\begin{lemma} \label{mct_1}
Under conditions \textbf{C}1-\textbf{C}3 and  independence of $\bi X$ and $Y$, as $\min\{n_1, n_2,..., n_K\} \to \infty$, we have 
\begin{align*}
\dfrac{\sum_{l=1}^n \sigma^2_{n,l}}{\sigma^2_0} \to 1\;\; \mbox{ in probability}, 
\end{align*}
where $\sigma^2_{n,l}=\E_{l-1}(M^2_{n, l}).$
\end{lemma}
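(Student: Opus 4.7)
The plan is to establish the lemma via an $L^2$ argument: first verify $\E\bigl[\sum_l \sigma_{n,l}^2\bigr] = \sigma_0^2$, then bound $\mathrm{Var}\bigl(\sum_l \sigma_{n,l}^2\bigr) = o(\sigma_0^4)$, so that the ratio converges to $1$ in mean square and hence in probability.

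The first step is to compute $\sigma_{n,l}^2 = \E_{l-1}(M_{n,l}^2)$ for each of the three cases of $M_{n,l}$. Because $\bi X_l$ is independent of $\mathscr{F}_{l-1}$ and each $M_{n,l}$ has the form $\sum_{j<l} a_{l,j}\, d(\bi X_l, \bi X_j)$ with case-dependent coefficients $a_{l,j}$, expanding the square and conditioning on $\mathscr{F}_{l-1}$ produces
$$\sigma_{n,l}^2 = \sum_{j<l} a_{l,j}^2\, \xi(\bi X_j) + 2 \sum_{j<i<l} a_{l,j} a_{l,i}\, \eta(\bi X_j, \bi X_i).$$
Reading the coefficients off the three formulas for $M_{n,l}$ shows that $|a_{l,j}|$ equals either the ``self'' magnitude $\asymp 1/(n n_k)$, when $\bi X_j$ lies in the same sub-sample as $\bi X_l$, or the ``cross'' magnitude $\asymp 1/n^2$, otherwise. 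Taking the expectation and using $\E\xi(\bi X_1)=\sigma^2$, $\E\eta=0$ from Lemma \ref{prop_eta}(1), and a combinatorial tally of $\sum_l\sum_{j<l} a_{l,j}^2$ over the three cases, recovers $\E\sum_l\sigma_{n,l}^2 = \sigma_0^2$ (which is also immediate from martingale-difference orthogonality combined with (\ref{eqn:vargcov})).

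The second step is the variance bound. Rearranging the double sum gives $\sum_l\sigma_{n,l}^2 = \sum_j b_j\, \xi(\bi X_j) + 2\sum_{j<i} c_{j,i}\, \eta(\bi X_j, \bi X_i)$, where $b_j := \sum_{l>j} a_{l,j}^2$ and $c_{j,i} := \sum_{l>i} a_{l,j}\, a_{l,i}$. By independence of the $\bi X_j$'s, by Lemma \ref{prop_eta}(2) (uncorrelatedness of distinct $\eta$-pairs), and by Lemma \ref{prop_eta}(4) (uncorrelatedness of $\xi$ with $\eta$ at shared arguments), all cross-expectations in the expansion vanish and
$$\mathrm{Var}\bigl(\sum_l\sigma_{n,l}^2\bigr) = \sum_j b_j^2\, \mathrm{Var}(\xi(\bi X_j)) + 4\sum_{j<i} c_{j,i}^2\, \tau^4 \;\;\leq\;\; \gamma^4 \sum_j b_j^2 + 4\tau^4 \sum_{j<i} c_{j,i}^2.$$
Estimating the coefficient sums from the two scalings of $a_{l,j}$ yields $\sum_j b_j^2 = O(n^{-5})$ and $\sum_{j<i} c_{j,i}^2 = O(n^{-4})$. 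Combined with $\sigma_0^4 \asymp \sigma^4/n^4$, division produces a bound of order $\gamma^4/(n\sigma^4) + \tau^4/\sigma^4$, and both terms vanish by Lemma \ref{ncond} (via condition \textbf{C}2) and by condition \textbf{C}3, respectively.

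The main obstacle will be the coefficient bookkeeping when $l$ varies over the three sub-samples: the coefficients $a_{l,j}$ take one of two magnitudes depending on whether $\bi X_l$ and $\bi X_j$ lie in the same sub-sample, and the nine possible pairings of sub-samples for $(l,j)$ (and the many more for $(l,j,i)$) each contribute separately to $b_j$ and $c_{j,i}$. Nevertheless, every block reduces to one of the two moments $\gamma^4$ and $\tau^4$ weighted by sums that are uniformly of size $O(n^{-4})$ or smaller, so the asymptotic orders are governed entirely by the hypotheses \textbf{C}2 and \textbf{C}3, and the general $K$ case follows the same template.
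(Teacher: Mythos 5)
Your proposal is correct and follows essentially the same route as the paper: it computes $\sigma_{n,l}^2$ in terms of $\xi$ and $\eta$, verifies $\E\sum_l\sigma_{n,l}^2=\sigma_0^2$, splits the sum into a $\xi$-part and an $\eta$-part (the paper's $R_n^{(1)}$ and $R_n^{(2)}$), kills the cross-terms via the orthogonality properties in Lemmas \ref{prop_d} and \ref{prop_eta}, and reduces the variance bound to $\gamma^4/(n\sigma^4)\to 0$ and $\tau^4/\sigma^4\to 0$, i.e.\ conditions \textbf{C}2 and \textbf{C}3, before applying Chebyshev. The only difference is presentational: your unified coefficient notation $a_{l,j}$ replaces the paper's explicit case-by-case expansion, and the orders you claim for $\sum_j b_j^2$ and $\sum_{j<i}c_{j,i}^2$ match the paper's computed $O(n^{-5})\gamma^4$ and $O(n^{-4})\tau^4$ terms.
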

\begin{proof} We first obtain three formulas of $\sigma_{n,l}^2$ according to $l$.

\noindent{\underline{\bf{Case 1}}}, for $l \leq n_1$, we have 
\begin{align*}
\sigma^2_{n,l}&=\E_{l-1}(M^2_{n,l})=\E\left \{\Big( -\dfrac{2(n-n_1)}{n(n-1)(n_1-1)}\sum^{k-1}_{j=1}d(\bi X_l, \bi X^{(1)}_j)\Big)^2|\mathscr{F}_{l-1}\right \}\\
&=\dfrac{4(n-n_1)^2}{n^2(n-1)^2(n_1-1)^2}\E\left \{ \sum_{i=1}^{l-1}\sum^{l-1}_{j=1}d(\bi X_l, \bi X^{(1)}_i)d(\bi X_l, \bi X^{(1)}_j)| \mathscr{F}_{l-1} \right \}\\
&=\dfrac{4(n-n_1)^2}{n^2(n-1)^2(n_1-1)^2}\sum_{i=1}^{l-1}\sum^{l-1}_{j=1}\E\left \{d(\bi X_l, \bi X^{(1)}_i)d(\bi X_l, \bi X^{(1)}_j)| \mathscr{F}_{l-1} \right \}\\
&=\dfrac{4(n-n_1)^2}{n^2(n-1)^2(n_1-1)^2} \sum_{i=1}^{l-1}\sum^{l-1}_{j=1}\E\left \{d(\bi X_l, \bi X^{(1)}_i)d(\bi X_l, \bi X^{(1)}_j)| \bi X^{(1)}_i, \bi X^{(1)}_j \right \}\\
&=\dfrac{4(n-n_1)^2}{n^2(n-1)^2(n_1-1)^2}\bigg \{\sum_{i=1}^{l-1}\xi(\bi X^{(1)}_i ) +\sum_{1 \leq i \neq j \leq l-1}\eta(\bi X^{(1)}_i, \bi X^{(1)}_j)   \bigg \}.
\end{align*}
\noindent{\underline{\bf{Case 2}}}, for $n_1<l  \leq n_1+n_2$, we have 
\begin{align*}
&\sigma^2_{n,l} \\
&=\E\left \{ \bigg (-\dfrac{2(n-n_2)}{n(n-1)(n_2-1)}\sum^{l-n_1-1}_{j=1}d(\bi X_l, \bi X^{(2)}_j)+{n \choose 2}^{-1}\sum_{i=1}^{n_1}d(\bi X_l, \bi X^{(1)}_i)\bigg)^2| \mathscr{F}_{l-1} \right \}\\
&= \E\left \{ \bigg(-\dfrac{2(n-n_2)}{n(n-1)(n_2-1)}\bigg)\bigg (\sum^{l-n_1-1}_{i=1}d(\bi X_l, \bi X^{(2)}_i)+{n \choose 2}^{-1}\sum_{i=1}^{n_1}d(\bi X_l, \bi X^{(1)}_i\bigg)\right.\\
&\left. \bigg(-\dfrac{2(n-n_2)}{n(n-1)(n_2-1)}\bigg)\bigg (\sum^{l-n_1-1}_{j=1}d(\bi X_l, \bi X^{(2)}_j)+{n \choose 2}^{-1}\sum_{j=1}^{n_1}d(\bi X_l, \bi X^{(1)}_j\bigg)| \mathscr{F}_{l-1} \right \}\\
&=\dfrac{4(n-n_2)^2}{n^2(n-1)^2(n_2-1)^2} \sum_{i=1}^{l-n_1-1}\xi(\bi X^{(2)}_i)+\dfrac{4(n-n_2)^2}{n^2(n-1)^2(n_2-1)^2}  \sum_{1 \leq i \neq j \leq l-n_1-1}\eta(\bi X^{(2)}_i, \bi X^{(2)}_j)\\
&-\dfrac{8(n-n_2)}{ n^2(n-1)^2(n_2-1)}\sum_{i=1}^{n_1}\sum_{j=1}^{l-n_1-1}\eta(\bi X^{(1)}_i, \bi X^{(2)}_j)
+{n \choose 2}^{-2} \sum_{i=1}^{n_1}\xi(\bi X^{(1)}_i)\\
&+{n \choose 2}^{-2} \sum_{1 \leq i \neq j \leq n_1}\eta(\bi X^{(1)}_i, \bi X^{(1)}_j).
%&=\dfrac{4}{n^2_1 n_2^2} \sum_{i=1}^{n_1}\E \Big (d(X_i, Y_{k-n_1})^2|X_1\Big)+{n_2 \choose 2}^{-2} \sum_{i=1}^{k-n_1-1}\E \big( d(Y_i, Y_{k-n_1})^2|Y_i\big)
\end{align*}
\noindent{\underline{\bf{Case 3}}}, for  $n_1+n_2<l  \leq n$, we have 
\begin{align*}
&\sigma^2_{n,l}=
\E\left \{ \bigg (- \dfrac{2(n-n_3)}{n(n-1)(n_3-1)}\sum^{l-n_1-n_2-1}_{j=1}d(\bi X_l, \bi X^{(3)}_j)+{n \choose 2}^{-1}\sum_{i=1}^{n_1}d(\bi X_l, \bi X^{(1)}_i)\right.\\
&\left.+{n \choose 2}^{-1}\sum_{i=1}^{n_2}d(\bi X_l, \bi X^{(2)}_i)\bigg)^2| \mathscr{F}_{l-1} \right \}\\
&={n \choose 2}^{-2}\sum_{i=1}^{n_1}\xi(\bi X^{(1)}_i)+{n \choose 2}^{-2}\sum_{1 \leq i \neq j \leq n_1}\eta(\bi X^{(1)}_i, \bi X^{(1)}_j)+2{n \choose 2}^{-2}\sum_{i=1}^{n_1}\sum_{j=1}^{n_2}\eta(\bi X^{(1)}_i, \bi X^{(2)}_j)\\
&-2{n \choose 2}^{-1}\dfrac{2(n-n_3)}{n(n-1)(n_3-1)}\sum_{i=1}^{n_1}\sum_{j=1}^{l-n_1-n_2-1}\eta(\bi X^{(1)}_i, \bi X^{(3)}_j)+{n \choose 2}^{-2}\sum_{i=1}^{n_2}\xi(\bi X^{(2)}_i)\\
&+{n \choose 2}^{-2}\sum_{1 \leq i \neq j \leq n_2}\eta(\bi X^{(2)}_i, \bi X^{(2)}_j)
-2{n \choose 2}^{-1}\dfrac{2(n-n_3)}{n(n-1)(n_3-1)}\sum_{i=1}^{n_2}\sum_{j=1}^{l-n_1-n_2-1}\eta(\bi X^{(2)}_i, \bi X^{(3)}_j)\\
&+\dfrac{4(n-n_3)^2}{n^2(n-1)^2(n_3-1)^2}\sum_{i=1}^{l-n_1-n_2-1}\xi(\bi X^{(3)}_i)
+\dfrac{4(n-n_3)^2}{n^2(n-1)^2(n_3-1)^2}\\
& \times \sum_{1 \leq i \neq j \leq l-n_1-n_2-1}\eta(\bi X^{(3)}_i, \bi X^{(3)}_j).
\end{align*}
Therefore, under  independence of $\bi X$ and $Y$, we have 
\begin{align*}
&\E \Big(\sum_{l=1}^n \sigma^2_{n, l}\Big)=\dfrac{4(n-n_1)^2}{n^2(n-1)^2(n_1-1)^2}\sum_{l=1}^{n_1}\sum_{i=1}^{l-1}\E \Big(d(\bi X_l, \bi X^{(1)}_i)\Big)^2\\
&+\dfrac{4(n-n_2)^2}{n^2(n-1)^2(n_2-1)^2} \sum_{l=n_1+1}^{n_1+n_2}\sum_{i=1}^{l-n_1-1}\E \Big (d(\bi X_l, \bi X^{(2)}_i)\Big)^2\\
&+{n \choose 2}^{-2} \sum_{l=n_1+1}^{n_1+n_2}\sum_{i=1}^{n_1}\E \Big( d(\bi X_l, \bi X^{(1)}_i)\Big)^2+{n \choose 2}^{-2}\sum_{l=n_1+n_2+1}^{n} \sum_{i=1}^{n_1}\E \Big(d(\bi X_l, \bi X^{(1)}_i)\Big)^2\\
&+{n \choose 2}^{-2}\sum_{l=n_1+n_2+1}^{n}\sum_{i=1}^{n_2}\E \Big(d(\bi X_l, \bi X^{(2)}_i)\Big)^2\\
&+\dfrac{4(n-n_3)^2}{n^2(n-1)^2(n_3-1)^2}\sum_{l=n_1+n_2+1}^{n}\sum_{i=1}^{l-n_1-n_2-1}\E\Big(d(\bi X_l, \bi X^{(3)}_i)\Big)^2\\
&=\Big(\dfrac{2n_1(n-n_1)^2}{n^2(n-1)^2(n_1-1)}+\dfrac{2n_2(n-n_2)^2}{n^2(n-1)^2(n_2-1)}+\dfrac{2n_3(n-n_3)^2}{n^2(n-1)^2(n_3-1)}\\
&+\dfrac{4n_1n_2+4n_1n_3+4n_2n_3}{n^2(n-1)^2}\Big)\E d^2(\bi X_1, \bi X_2)\\
&=\dfrac{2}{n^2(n-1)^2}\bigg\{\dfrac{n_1(n-n_1)^2}{(n_1-1)}+\dfrac{n_2(n-n_2)^2}{(n_2-1)}+\dfrac{n_3(n-n_3)^2}{(n_3-1)}\\
&+2n_1n_2+2n_1n_3+2n_2n_3\bigg\}\E d^2(\bi X_1, \bi X_2). 
%& = \sigma_0^2 = var(G_n)
\end{align*}
It is not difficult to show that  
\begin{align} \label{Esigma}
\sigma_0^2 = \mbox{var}(G_{n})=\E \Big(\sum_{l=1}^n \sigma^2_{n,l}\Big).
\end{align}
%On the other hand, when $K=3$, from equation (\ref{eqn:vargcov}), we have
%\begin{align*}
%Var(U_n)&=\dfrac{2}{n^2}\bigg(\dfrac{n_1}{n_1-1}+\dfrac{n_2}{n_2-1}+\dfrac{n_3}{n_3-1}-\dfrac{n}{n-1}\bigg)\E d^2(\bi X_1, \bi X_2)\\
%&=
%%&Var(U_n)=\dfrac{2}{n^2(n-1)^2}\bigg(\dfrac{n_1(n-1)^2}{n_1-1}+\dfrac{n_2(n-1)^2}{n_2-1}+\dfrac{n_3(n-1)^2}{n_3-1}-n(n-1)\bigg)\E d^2(X_1, X_2)\\
%%&=\dfrac{4nn_1n_2n_2-6n_1n_2n_3+4n_1n_2+4n_1n_3+4n_2n_3-2nn_1n_3-2nn_1n_2-2nn_2n_3}{n^2(n_1-1)(n_2-1)(n_3-1)(n-1)}\E d^2(X_1, X_2)
%\end{align*}
%It is easy to have that 
%\begin{align*}
%\E G_{n}=\sum_{k=1}^n M_{n,k}, \quad var(G_{n})=\E \Big(\sum_{k=1}^n \sigma^2_{n,k}\Big).
%\end{align*}
To complete the proof of Lemma \ref{mct_1},  it suffices to show that 
\begin{align}\label{varratio}
\dfrac{var(\sum_{l=1}^n \sigma^2_{n,l})}{var^2(G_{n})} \to 0.
\end{align}
We partition $\sum_{l=1}^n \sigma^2_{n,l}$ into two parts, that is, 
\begin{align*}
\sum_{k=1}^n \sigma^2_{n,k}:=R^{(1)}_n+R^{(2)}_n,
\end{align*}
where 
\begin{align*}
&R^{(1)}_n=\dfrac{4(n-n_1)^2}{n^2(n-1)^2(n_1-1)^2}\sum_{k=1}^{n_1}\sum_{i=1}^{k-1}\xi(\bi X^{(1)}_i )+\dfrac{4(n-n_2)^2}{n^2(n-1)^2(n_2-1)^2}\sum_{k=n_1+1}^{n_1+n_2} \sum_{i=1}^{k-n_1-1}\xi(\bi X^{(2)}_i)
\end{align*}
\begin{align*}
&+{n \choose 2}^{-2} \sum_{k=n_1+1}^{n}\sum_{i=1}^{n_1}\xi(\bi X^{(1)}_i)+{n \choose 2}^{-2}\sum_{k=n_1+n_2+1}^{n}\sum_{i=1}^{n_2}\xi(\bi X^{(2)}_i)\\
&+\dfrac{4(n-n_3)^2}{n^2(n-1)^2(n_3-1)^2}\sum_{k=n_1+n_2+1}^{n}\sum_{i=1}^{k-n_1-n_2-1}\xi(\bi X^{(3)}_i),
\end{align*}
\begin{align*}
&R^{(2)}_n=\dfrac{4(n-n_1)^2}{n^2(n-1)^2(n_1-1)^2}\sum_{k=1}^{n_1}\sum_{1 \leq i \neq j \leq k-1}\eta(\bi X^{(1)}_i, \bi X^{(1)}_j)\\
&+\dfrac{4(n-n_2)^2}{n^2(n-1)^2(n_2-1)^2} \sum_{k=n_1+1}^{n_1+n_2} \sum_{1 \leq i \neq j \leq k-n_1-1}\eta(\bi X^{(2)}_i, \bi X^{(2)}_j)\\
&-\dfrac{8(n-n_2)}{ n^2(n-1)^2(n_2-1)}\sum_{k=n_1+1}^{n_1+n_2}\sum_{i=1}^{n_1}\sum_{j=1}^{k-n_1-1}\eta(\bi X^{(1)}_i, \bi X^{(2)}_j)\\
&+{n \choose 2}^{-2}\sum_{k=n_1+1}^n\sum_{1 \leq i \neq j \leq n_1}\eta(\bi X^{(1)}_i, \bi X^{(1)}_j)+2{n \choose 2}^{-2}\sum_{k=n_1+n_2+1}^n\sum_{i=1}^{n_1}\sum_{j=1}^{n_2}\eta(\bi X^{(1)}_i, \bi X^{(2)}_j)\\
&-2{n \choose 2}^{-1}\dfrac{2(n-n_3)}{n(n-1)(n_3-1)}\sum_{k=n_1+n_2+1}^n\sum_{i=1}^{n_1}\sum_{j=1}^{k-n_1-n_2-1}\eta(\bi X^{(1)}_i, \bi X^{(3)}_j)\\
&+{n \choose 2}^{-2}\sum_{k=n_1+n_2+1}^n\sum_{1 \leq i \neq j \leq n_2}\eta(\bi X^{(2)}_i, \bi X^{(2)}_j)\\
&-2{n \choose 2}^{-1}\dfrac{2(n-n_3)}{n(n-1)(n_3-1)}\sum_{k=n_1+n_2+1}^n\sum_{i=1}^{n_2}\sum_{j=1}^{k-n_1-n_2-1}\eta(\bi X^{(2)}_i, \bi X^{(3)}_j)\\
&+\dfrac{4(n-n_3)^2}{n^2(n-1)^2(n_3-1)^2}\sum_{k=n_1+n_2+1}^n\sum_{1 \leq i \neq j \leq k-n_1-n_2-1}\eta(\bi X^{(3)}_i, \bi X^{(3)}_j).
\end{align*}
Under independence of $\bi X$ and $Y$ and by the properties in Lemmas \ref{prop_d} and \ref{prop_eta}, $R^{(1)}$ and $R^{(2)}$ are orthogonal, that is,
$$ \E (R^{(1)}_n R^{(2)}_n)=0. $$
Also,
\begin{align*}
&\E (R^{(1)}_n)^2=\E \Big \{\dfrac{4(n-n_1)^2}{n^2(n-1)^2(n_1-1)^2}\sum_{k=1}^{n_1}\sum_{i=1}^{k-1}\xi(\bi X^{(1)}_i )+\dfrac{4(n-n_2)^2}{n^2(n-1)^2(n_2-1)^2}\\
&\times\sum_{k=n_1+1}^{n_1+n_2} \sum_{i=1}^{k-n_1-1}\xi(\bi X^{(2)}_i)
%&+{n \choose 2}^{-2} \sum_{k=n_1+1}^{n_1+n_2}\sum_{i=1}^{n_1}\xi(X^{(1)}_i)+{n \choose 2}^{-2}\sum_{k=n_1+n_2+1}^{n}\sum_{i=1}^{n_1}\xi(X^{(1)}_i)\\
+{n \choose 2}^{-2} \sum_{k=n_1+1}^{n}\sum_{i=1}^{n_1}\xi(\bi X^{(1)}_i)
+{n \choose 2}^{-2}\sum_{k=n_1+n_2+1}^{n}\sum_{i=1}^{n_2}\xi(\bi X^{(2)}_i)\\
&+\dfrac{4(n-n_3)^2}{n^2(n-1)^2(n_3-1)^2}\sum_{k=n_1+n_2+1}^{n}\sum_{i=1}^{k-n_1-n_2-1}\xi(\bi X^{(3)}_i)\Big \}^2\\
\end{align*}
\begin{align*}
:=&\E\bigg\{A^2+B^2+C^2+D^2+E^2+2AB+2AC+2AD+2AE+2BC\\
&+2BD+2BE+2CD+2CE+2DE\bigg\},
\end{align*}
where
\begin{align*}
\E A^2&=\E \Big \{\dfrac{4(n-n_1)^2}{n^2(n-1)^2(n_1-1)^2}\sum_{k=1}^{n_1} \sum_{i=1}^{k-1}\xi(\bi X^{(1)}_i)\Big\}^2\\
&=\dfrac{16(n-n_1)^4}{n^4(n-1)^4(n_1-1)^4}\Big\{\dfrac{(n_1-1)(2n_1-1)n_1}{6}\gamma^4+\Big(\dfrac{n_1(n_1-1)^2(n_1-2)}{4}\\
&+\dfrac{n_1(n_1-1)(n_1-2)}{6}\Big)\sigma^4\Big\},\\
\E B^2&=\E \Big \{\dfrac{4(n-n_2)^2}{n^2(n-1)^2(n_2-1)^2}\sum_{k=n_1+1}^{n_1+n_2} \sum_{i=1}^{k-n_1-1}\xi(\bi X^{(2)}_i)\Big\}^2\\
&=\dfrac{16(n-n_2)^4}{n^4(n-1)^4(n_2-1)^4}\Big\{\dfrac{(n_2-1)(2n_2-1)n_2}{6}\gamma^4+\Big(\dfrac{n_2(n_2-1)^2(n_2-2)}{4}\\
&+\dfrac{n_2(n_2-1)(n_2-2)}{6}\Big)\sigma^4\Big\},\\
\E C^2&=\E\big( {n \choose 2}^{-2} \sum_{k=n_1+1}^{n_1+n_2}\sum_{i=1}^{n_1}\xi(\bi X^{(1)}_i)\big)^2={n \choose 2}^{-4} (n_2+n_3)^2\{n_1 \gamma^4+n_1(n_1-1)\sigma^4\},\\
\E D^2&=\E\big( {n \choose 2}^{-2}\sum_{k=n_1+n_2+1}^{n}\sum_{i=1}^{n_2}\xi(\bi X^{(2)}_i)\big)^2={n \choose 2}^{-4}n^2_3\{n_2 \gamma^4+n_2(n_2-1)\sigma^4\},\\
\E E^2&=\E \Big \{\dfrac{4(n-n_3)^2}{n^2(n-1)^2(n_3-1)^2}\sum_{k=n_1+n_2+1}^{n} \sum_{i=1}^{k-n_1-n_2-1}\xi(\bi X^{(3)}_i)\Big\}^2\\
&=\dfrac{16(n-n_3)^4}{n^4(n-1)^4(n_3-1)^4}\Big\{\dfrac{(n_3-1)(2n_3-1)n_3}{6}\gamma^4+\Big(\dfrac{n_3(n_3-1)^2(n_3-2)}{4}\\
&+\dfrac{n_3(n_3-1)(n_3-2)}{6}\Big)\sigma^4\Big\},\\
\E AB&=\dfrac{16 (n-n_1)^2(n-n_2)^2}{n^4(n-1)^4(n_1-1)^2(n_2-1)^2}\dfrac{n_1(n_1-1)}{2}\dfrac{n_2(n_2-1)}{2}\sigma^4,\\
\E AC&={n \choose 2}^{-2}\dfrac{4(n-n_1)^3}{n^2(n-1)^2(n_1-1)^2}\Big(\dfrac{n_1(n_1-1)}{2}\gamma^4+\big(\dfrac{n_1(n_1-1)(n_1-2)}{2}+\dfrac{n_1(n_1-1)}{2}\big)\sigma^4\Big),\\
\E AD&=\dfrac{4(n-n_1)^2}{n^2(n-1)^2(n_1-1)^2} {n \choose 2}^{-2} n_3n_2 \dfrac{n_1(n_1-1)}{2}\sigma^4,\\
\E AE&=\dfrac{4(n-n_1)^2}{n^2(n-1)^2(n_1-1)^2}\dfrac{4(n-n_3)^2}{n^2(n-1)^2(n_3-1)^2}\dfrac{n_1(n_1-1)}{2}\dfrac{n_3(n_3-1)}{2}\sigma^4,
\end{align*}
\begin{align*}
\E BC&=\dfrac{4(n-n_2)^2}{n^2(n-1)^2(n_2-1)^2} {n \choose 2}^{-2} n_1(n-n_1) \dfrac{n_2(n_2-1)}{2}\sigma^4,\\
\E BD&={n \choose 2}^{-2}\dfrac{4n_3(n-n_2)^2}{n^2(n-1)^2(n_2-1)^2}\Big(\dfrac{n_2(n_2-1)}{2}\gamma^4+\big(\dfrac{n_2(n_2-1)(n_2-2)}{2}+\dfrac{n_2(n_2-1)}{2}\big)\sigma^4\Big),\\
\E BE&=\dfrac{4(n-n_2)^2}{n^2(n-1)^2(n_2-1)^2}\dfrac{4(n-n_3)^2}{n^2(n-1)^2(n_3-1)^2}\dfrac{n_2(n_2-1)}{2}\dfrac{n_3(n_3-1)}{2}\sigma^4,\\
\E CD&={n \choose 2}^{-4}n_1n_2n_3(n_2+n_3)\sigma^4,\\
\E CF&=\dfrac{4(n-n_3)^2}{n^2(n-1)^2(n_3-1)^2}{n \choose 2}^{-2}n_1(n_2+n_3)\dfrac{n_3(n_3-1)}{2}\sigma^4,\\
\E DE&=\dfrac{4(n-n_3)^2}{n^2(n-1)^2(n_3-1)^2}{n\choose 2}^{-2}n_2n_3 \dfrac{n_3(n_3-1)}{2}\sigma^4.
\end{align*}
%by applying Lemma \ref{prop_d}.
Therefore,
\begin{align*}
&\E (R^{(1)}_n)^2=\dfrac{4}{n^4(n-1)^4}\Big \{ \dfrac{n^2_1(n-n_1)^4}{(n_1-1)^2}+\dfrac{n^2_2(n-n_2)^4}{(n_2-1)^2}+\dfrac{n^2_3(n-n_3)^4}{(n_3-1)^2}\\
&+4 n^2_1n^2_2+4 n^2_1n^2_3+4 n^2_2n^2_3+8n^2_1n_2n_3+8n_1n^2_2n_3+8n_1n_2n^2_3\\
&+\dfrac{2n_1n_2(n-n_1)^2(n-n_2)^2}{(n_1-1)(n_2-1)}+\dfrac{2n_1n_3(n-n_1)^2(n-n_3)^2}{(n_1-1)(n_3-1)}+\dfrac{2n_2n_3(n-n_2)^2(n-n_3)^2}{(n_2-1)(n_3-1)}\\
&+\dfrac{4n^2_1n_2+4n^2_1n_3+4n_1n_2n_3}{n_1-1}(n-n_1)^2\\
&+\dfrac{4n_1n^2_2+4n^2_2n_3+4n_1n_2n_3}{n_2-1}(n-n_2)^2\\
&+\dfrac{4n_1n^2_3+4n_2n^2_3+4n_1n_2n_3}{n_3-1}(n-n_3)^2+o(n^{-4})
\Big \}\sigma^4+O(n^{-5})\gamma^4\\
&=(16n^{-4}+o(n^{-4})\sigma^4+O(n^{-5})\gamma^4.
\end{align*}
Similarly, after a tedious evaluation, we have
\begin{align*}
\E (R^{(2)}_n)^2&=\tau^4 {n \choose 2}^{-4} \Big \{ n^1_1(n_2+n_3)^2+n^2_3n^2_2+4n_1n_2n^2_3+\dfrac{n_1(n_2+n_3)^3}{3}\\
&+\dfrac{n_2(n_1+n_3)^3}{3}-4n_1n_2n_3(n_1+n_3)\Big\}+o(n^{-4})\\
&=O(n^{-4})\tau^{4}+o(n^{-4}).
\end{align*}
Now we have 
\begin{align*}
\mbox{var}(\sum_{k=1}^n\sigma^2_{n,k})&=\E(\sum_{k=1}^n\sigma^2_{n,k})^2-\{\E(\sum_{k=1}^n\sigma^2_{n,k})\}^2 =\E (R^{(1)}_n)^2+\E (R^{(2)}_n)^2-\mbox{var}^2(G_n).
\end{align*}
To prove (\ref{varratio}),  we only need to show that
\begin{align*}
\dfrac{\E(\sum^n_{l=1}\sigma^2_{n,l})^2}{var^2(G_n)} \to 1.
\end{align*}
This is true, because 
\begin{align*}
&\E(\sum^n_{k=1}\sigma^2_{n,k})^2=\E (R^{(1)}_n)^2+\E (R^{(2)}_n)^2\\
&=\dfrac{4}{n^4(n-1)^4}\Big \{ \dfrac{n^2_1(n-n_1)^4}{(n_1-1)^2}+\dfrac{n^2_2(n-n_2)^4}{(n_2-1)^2}+\dfrac{n^2_3(n-n_3)^4}{(n_3-1)^2}\\
&+4 n^2_1n^2_2+4 n^2_1n^2_3+4n^2_2n^2_3+8n^2_1n_2n_3+8n_1n^2_2n_3+8n_1n_2n^2_3\\
&+\dfrac{2n_1n_2(n-n_1)^2(n-n_2)^2}{(n_1-1)(n_2-1)}+\dfrac{2n_1n_3(n-n_1)^2(n-n_3)^2}{(n_1-1)(n_3-1)}+\dfrac{2n_2n_3(n-n_2)^2(n-n_3)^2}{(n_2-1)(n_3-1)}\\
&+\dfrac{4n^2_1n_2+4n^2_1n_3+4n_1n_2n_3}{n_1-1}(n-n_1)^2
+\dfrac{4n_1n^2_2+4n^2_2n_3+4n_1n_2n_3}{n_2-1}(n-n_2)^2\\
&+\dfrac{4n_1n^2_3+4n_2n^2_3+4n_1n_2n_3}{n_3-1}(n-n_3)^2
\Big \}\sigma^4+O(n^{-5})\gamma^4+o(n^{-4})+O(n^{-4})\tau^4\\
&=\dfrac{16  \sigma^4}{n^4}+o(1),
\end{align*}
where the last equality is obtained under conditions \textbf{C}2 and \textbf{C}3 and Lemma (\ref{ncond}).  From (\ref{eqn:vargcov}), we have
\begin{align*}
\mbox{var}^2(G_n)&=\dfrac{4}{n^4}\bigg(\dfrac{n_1}{n_1-1}+\dfrac{n_2}{n_2-1}+\dfrac{n_3}{n_3-1}-\dfrac{n}{n-1}\bigg)^2\sigma^4\\
&=\dfrac{16\sigma^4}{n^4}+o(1).
%&=\dfrac{4}{n^4(n-1)^4}\bigg(\dfrac{n_1(n-1)^2}{n_1-1}+\dfrac{n_2(n-1)^2}{n_2-1}+\dfrac{n_3(n-1)^2}{n_3-1}-n(n-1)\bigg)^2\sigma^4\\
%&Var(U_n)=\dfrac{2}{n^2(n-1)^2}\bigg(\dfrac{n_1(n-1)^2}{n_1-1}+\dfrac{n_2(n-1)^2}{n_2-1}+\dfrac{n_3(n-1)^2}{n_3-1}-n(n-1)\bigg)\E d^2(X_1, X_2)\\
%&=\dfrac{4nn_1n_2n_2-6n_1n_2n_3+4n_1n_2+4n_1n_3+4n_2n_3-2nn_1n_3-2nn_1n_2-2nn_2n_3}{n^2(n_1-1)(n_2-1)(n_3-1)(n-1)}\E d^2(X_1, X_2)
\end{align*}
Therefore,  as $\min\{n_1, n_2, n_3\} \to \infty$, 
\begin{align*}
\dfrac{\E(\sum^n_{l=1}\sigma^2_{n,l})^2}{\mbox{var}^2(G_{n})} \to 1 \;\;\;\;\mbox{ and} \;\;\;\dfrac{\mbox{var}(\sum^n_{l=1}\sigma^2_{n,l})}{\mbox{var}^2(G_{n})} \to 0.
\end{align*}
The last step of the proof is to apply Chebyshev's inequality together with (\ref{Esigma}) and (\ref{varratio}). More specifically, for any $\varepsilon >0$, 
\begin{align*}
& P\left(\left|\frac{\sum_{l=1}^n \sigma_{n,l}^2}{\sigma_0^2}-1\right| > \varepsilon\right) = P\left(\left |\sum_{l=1}^n\sigma_{n,l}^2 -\E \left(\sum_{l=1}^n\sigma_{n,l}^2\right)\right | > \varepsilon \mbox{var}(G_n)\right)\\
& \leq \frac{\mbox{var}(\sum_{l=1}^n \sigma_{n,l}^2)}{\epsilon^2 \mbox{var}^2(G_n)} \rightarrow 0.
\end{align*} 
This completes the proof for Lemma \ref{mct_1}.
\end{proof}
%%%%%%%%%%%%%%%%%%%%%%%%%%%%%%%%%%%%%%%%%%%%%%%%%%%%%%%
%%%%%%%%%%%%%%%%%%%%%%%%%%%%%%%%%%%%
\begin{lemma}\label{mct_2}
Under conditions \textbf{C}1-\textbf{C}2 and  independence of $\bi X$ and $Y$, as $\min\{n_1, n_2, n_3\} \to \infty $, we have 
\begin{align*}
\dfrac{\sum_{l=1}^n \E(M^4_{n,l})}{\mbox{var}^2(G_{n})} \to 0.
\end{align*}
\end{lemma}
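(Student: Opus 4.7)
The plan is to verify the conditional Lindeberg-type condition for the martingale central limit theorem by bounding the fourth-moment sum directly. I would split the analysis into the three cases $1\leq l\leq n_1$, $n_1<l\leq n_1+n_2$, and $n_1+n_2<l\leq n$, corresponding to the three explicit formulas for $M_{n,l}$ obtained in the proof of Lemma \ref{mct_1}. In all cases $M_{n,l}$ has the common structure $\sum_{j} c_{lj}\, d(\bi X_l,\bi X_j)$, a linear combination of double-centered distances sharing the fresh argument $\bi X_l$, with indices $j$ running over $\mathscr{F}_{l-1}$. The central device throughout is the orthogonality apparatus collected in Lemma \ref{prop_d}.

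For Case 1, expanding $\E M_{n,l}^4 = c^4\,\E\bigl(\sum_{j=1}^{l-1} d(\bi X_l,\bi X_j^{(1)})\bigr)^4$ with $c = 2(n-n_1)/[n(n-1)(n_1-1)]$ produces sums of mixed products $\E\prod_{s=1}^{4} d(\bi X_l,\bi X_{j_s}^{(1)})$. By properties 4--6 of Lemma \ref{prop_d}, every mixed product with more than two distinct indices among the $j_s$ vanishes, leaving only the diagonal contributions $\E d^4(\bi X_l,\bi X_j)=\omega^4$ and the squared-pair contributions $\E d^2(\bi X_l,\bi X_j) d^2(\bi X_l,\bi X_k)=\gamma^4$ for $j\neq k$. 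Consequently
$$\E M_{n,l}^4 \leq c^4\bigl[(l-1)\,\omega^4 + 3(l-1)^2\,\gamma^4\bigr].$$

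For Cases 2 and 3, write $M_{n,l}=\sum_{r} T_r$ where each $T_r$ is a sum of the form $c_r\sum_i d(\bi X_l,\bi X_i^{(k_r)})$ over a single group. Expanding $\E(\sum_r T_r)^4$ yields diagonal pieces $\E T_r^4$ (bounded as in Case 1) together with cross terms. Any cross term involving an odd power of some $T_r$ vanishes: conditioning on $\bi X_l$ and all $\bi X$'s appearing in the other $T_s$'s, the independence between groups and property 2 of Lemma \ref{prop_d} give $\E[T_r\mid \bi X_l, \{\bi X\in T_s\}_{s\ne r}]=c_r\sum_i \E[d(\bi X_l,\bi X_i^{(k_r)})\mid \bi X_l]=0$. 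The remaining cross terms $\E T_r^2 T_s^2$ are controlled by Cauchy--Schwarz, $\E T_r^2 T_s^2\leq (\E T_r^4+\E T_s^4)/2$, so $\E M_{n,l}^4\leq C\sum_r \E T_r^4$ for a fixed combinatorial constant $C$.

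Finally, summing the resulting bounds over $l$ and tracking the coefficient scales $c\asymp n^{-2}$ and $\binom{n}{2}^{-1}\asymp n^{-2}$, one obtains
$$\sum_{l=1}^n \E M_{n,l}^4 = O\!\left(\frac{\omega^4}{n^6} + \frac{\gamma^4}{n^5}\right).$$
Dividing by $\mbox{var}^2(G_n)\sim 16\sigma^4/n^4$ computed in the proof of Lemma \ref{mct_1}, the ratio is $O\bigl(\omega^4/(n^2\sigma^4)\bigr)+O\bigl(\gamma^4/(n\sigma^4)\bigr)$. The first term is $o(1)$ by condition \textbf{C}2, and the second is $o(1)$ by Lemma \ref{ncond}, which was itself a consequence of \textbf{C}2. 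The main obstacle I foresee is the combinatorial bookkeeping for Case 3, where three coefficient scales interact and produce three different cross-product pairs; however, the odd-power vanishing argument is identical across cases, so no genuinely new estimate is needed once Case 1 is in hand.
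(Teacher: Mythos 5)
Your proposal is correct and follows essentially the same route as the paper: the same three-case decomposition of $M_{n,l}$, the same use of the orthogonality properties in Lemma \ref{prop_d} to kill all mixed products except the $\omega^4$-diagonal and $\gamma^4$-pair terms (the paper computes the cross terms $\E A^2B^2$ exactly where you invoke Cauchy--Schwarz, an immaterial difference), and the same final step of dividing by $\mbox{var}^2(G_n)\asymp 16\sigma^4/n^4$ and invoking condition \textbf{C}2 together with Lemma \ref{ncond}.
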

\begin{proof} Now we compute $\E M^4_{n, l}$ under  independence of $\bi X$ and $Y$. 

\noindent{\underline{\bf Case 1}}, for $1\leq l \leq n_1$, we have 
\begin{align*}
&\E M^4_{n,l}=\E\Big\{-\dfrac{2(n-n_1)}{n(n-1)(n_1-1)}\sum^{l-1}_{j=1}d(\bi X_l, \bi X^{(1)}_j)\Big\}^4\\
&=\dfrac{16(n-n_1)^4}{n^4(n-1)^4(n_1-1)^4}\{(l-1)\E d^4(\bi X_1, \bi X_l)+3(l-1)(l-2)\E d^2(\bi X_1, \bi X_l)d^2(\bi X_2, \bi X_l)\}\\
&=\dfrac{16(n-n_1)^4}{n^4(n-1)^4(n_1-1)^4}\{(l-1)\omega^4+3(l-1)(l-2)\gamma^4\};
\end{align*}
\noindent{\underline{\bf Case 2}}, for $n_1<l \leq n_1+n_2$, we have
\begin{align*}
&\E M^4_{n,l}=\E\Big\{-\dfrac{2(n-n_2)}{n(n-1)(n_2-1)}\sum^{l-n_1-1}_{j=1}d(\bi X_l, \bi X^{(2)}_j)+{n \choose 2}^{-1}\sum_{i=1}^{n_1}d(\bi X_l, \bi X^{(1)}_i)\Big\}^4\\
&=\E A^4+\E B^4+6\E A^2B^2,
\end{align*}
where 
\begin{align*}
&\E A^4=\dfrac{16(n-n_2)^4}{n^4(n-1)^4(n_2-1)^4} \{(l-n_1-1) \omega^4+3(l-n_1-1)(l-n_1-2)\gamma^4\},\\
&\E B^4={n \choose 2}^{-4} \{n_1\omega^4+3n_1(n_1-1)\gamma^4\},\\
&\E A^2B^2=\Big(-\dfrac{2(n-n_2)}{n(n-1)(n_2-1)}\Big)^2{n \choose 2}^{-2}\{n_1(l-n_1-1)\}\gamma^4.
\end{align*}
\noindent{\underline{\bf Case 3}}, for $n_1+n_2<l \leq n$, we have
\begin{align*}
\E M^4_{n,l}&=\E\Big\{- \dfrac{2(n-n_3)}{n(n-1)(n_3-1)}\sum^{l-n_1-n_2-1}_{j=1}d(\bi X_l, \bi X^{(3)}_j)+{n \choose 2}^{-1}\sum_{i=1}^{n_1}d(\bi X_l, \bi X^{(1)}_i)\\
&+{n \choose 2}^{-1}\sum_{i=1}^{n_2}d(\bi X_l, \bi X^{(2)}_i)\Big\}^4\\
&=\E A^4+\E B^4+ \E C^4+6(\E A^2B^2+\E A^2C^2+\E B^2C^2),
\end{align*}
where 
\begin{align*}
\E A^4&=\dfrac{16(n-n_3)^4}{n^4(n-1)^4(n_3-1)^4}\{(l-n_1-n_2-1) \omega^4+3(l-n_1-n_2-1)\\
&\times(l-n_1-n_2-2)\gamma^4\},\\
\E B^4&={n \choose 2}^{-4} \{n_1\omega^4+3n_1(n_1-1)\gamma^4\},
\end{align*}
\begin{align*}
&\E C^4={n \choose 2}^{-4} \{n_2 \omega^4+3n_2(n_2-1)\gamma^4\},\\
&\E A^2B^2=\Big(-\dfrac{2(n-n_3)}{n(n-1)(n_3-1)}\Big)^2{n \choose 2}^{-2}\{n_1(l-n_1-n_2-1)\}\gamma^4,\\
&\E A^2C^2=\Big(-\dfrac{2(n-n_3)}{n(n-1)(n_3-1)}\Big)^2{n \choose 2}^{-2}\{n_2(l-n_1-n_2-1)\}\gamma^4,\\
&\E B^2C^2={n \choose 2}^{-4}n_1n_2\gamma^4.
\end{align*}
Therefore,
\begin{align*}
&\sum_{l=1}^n \E M^4_{n,l}=\sum_{l=1}^{n_1}\dfrac{16(n-n_1)^4}{n^4(n-1)^4(n_1-1)^4}\{(l-1)\omega^4+3(l-1)(l-2)\gamma^4\}\\
&+\sum_{l=n_1+1}^{n_2} \Big\{\dfrac{16(n-n_1)^4}{n^4(n-1)^4(n_1-1)^4} \{(l-n_1-1) \omega^4+3(l-n_1-1)(l-n_1-2)\gamma^4\}\\
&+{n \choose 2}^{-4} \{n_1\omega^4+3n_1(n_1-1)\sigma^4\}+6\Big(\dfrac{2(n-n_2)}{n(n-1)(n_2-1)}\Big)^2{n \choose 2}^{-2}\{n_1(l-n_1-1)\}\gamma^4\Big\}\\
&+\sum_{l=n_1+n_2+1}^n \Big\{ \dfrac{16(n-n_3)^4}{n^4(n-1)^4(n_3-1)^4}\{(l-n_1-n_2-1) \omega^4+3(l-n_1-n_2-1)\\
&(l-n_1-n_2-2)\gamma^4\}\\
&+{n \choose 2}^{-4} \{n_1\omega^4+3n_1(n_1-1)\sigma^4\}+{n \choose 2}^{-4} \{n_2 \omega^4+3n_2(n_2-1)\gamma^4\}\\
&+6\Big(-\dfrac{2(n-n_3)}{n(n-1)(n_3-1)}\Big)^2{n \choose 2}^{-2}\{n_1(l-n_1-n_2-1)\}\gamma^4\\
&+6\Big(\dfrac{2(n-n_3)}{n(n-1)(n_3-1)}\Big)^2{n \choose 2}^{-2}\{n_2(l-n_1-n_2-1)\}\gamma^4+6{n \choose 2}^{-4}n_1n_2\gamma^4\\
&=O(n^{-5})\gamma^4+O(n^{-5})\omega^4\\
&=o(n^{-4})\sigma^4.
\end{align*}
%\end{enumerate}
The last equality is due to Condition \textbf{C}2 and Lemma \ref{ncond}. This completes the proof of this lemma. 
\end{proof}

Lemma \ref{mct_2} implies that the Lindeberg's condition holds. Along with Lemma \ref{mct_1}, an application of the martingale CLT completes the proof of Theorem \ref{gcov1}. 

\subsection{Proof of Theorem \ref{power}} 

As $\hat{\sigma}_0$ in (\ref{eqn:sigma0}) is a ratio consistent estimator for $\sigma_0$, it is sufficient to show that $\dfrac{\mbox{gCov}_n(X, Y)}{\sigma_0}>C$ for any arbitrarily large constant $C>0$ under ${\cal H}_1$. 

\begin{align}
&\E \big(\mbox{gCov}_n(\bi X, Y)-\mbox{gCov}(\bi X, Y)\big)^2 \nonumber\\
&=\E \bigg((U_n-\E U_n)-\sum_{k=1}^K \hat{p}_k (U_{n_k}-\E U_{n_k})+\sum_{k=1}^K (p_k-\hat{p}_k) \E U_{n_k}\bigg)^2\nonumber\\
&\leq (2K+1)\left\{ \E (U_n-\E U_n)^2+\sum_{k=1}^K \hat{p}^2_k \E(U_{n_k}-\E U_{n_k})^2+\sum_{k=1}^K\E (p_k-\hat{p}_k)^2( \E U_{n_k})^2\right\}\nonumber\\
&\leq (2K+1) \left( \dfrac{C_1}{n}\E\|\bi X_i-\bi X_j\|^2+C_2\sum_{k=1}^K \dfrac{ \hat{p}^2_k}{n_k} \E\|\bi X^{(k)}_i-\bi X^{(k)}_j\|^2 +\sum_{k=1}^K\frac{p_k(1-p_k)\Delta_k^2}{n_k} \right)\label{ineq:sigma1}\\
&= \dfrac{C(2K+1)}{n}\bigg(\E\|\bi X_i-\bi X_j\|^2+\sum_{k=1}^K \hat{p}_k \E\|\bi X^{(k)}_i-\bi X^{(k)}_j\|^2+O(1)\bigg).\nonumber
\end{align}
The inequality (\ref{ineq:sigma1}) is obtained by applying the moment inequality of $U$-statistics from \cite{Koroljuk1994} (p.72) and conditional Jensen's inequality. Hence,
\begin{align*}
|\mbox{gCov}_n(\bi X, Y)-\mbox{gCov}(\bi X, Y)| = O_p(n^{-1/2}).
\end{align*}
With the equation (\ref{eqn:vargcov1}), we have
\begin{align} \label{order1}
\left|\dfrac{\mbox{gCov}_n(\bi X, Y)}{\sigma_0}-\dfrac{\mbox{gCov}(\bi X, Y)}{\sigma_0}\right|= O_p(n^{1/2}) \rightarrow \infty. 
\end{align}
Under condition \textbf{C}4, $\sqrt{n}\mbox{gCov}(\bi X, Y)\rightarrow \infty$,  we have 
\begin{align}\label{order2}
\left| \dfrac{\mbox{gCov}_n(\bi X, Y)-\mbox{gCov}(\bi X, Y)}{\mbox{gCov}(\bi X, Y)}\right| \rightarrow 0 \;\; \mbox{in probability}. 
\end{align}
With (\ref{order1}) and (\ref{order2}) together, we can conclude that  $\dfrac{\mbox{gCov}_n(\bi X, Y)}{\sigma_0}  \rightarrow \infty$ in probability. Therefore, $P(\mbox{gCov}_n(\bi X, Y)>Z_\alpha \hat{\sigma}_0) \rightarrow 1$.  We have completed the proof. 

\vspace{0.5cm}
\noindent{\bf Acknowledgement} 

\vspace{0.2cm}
\noindent Thanks Jun Li for sharing her R codes on two-sample tests with us.

\end{document}